\documentclass[11pt]{amsart}
\usepackage[a4paper,vmargin={3.5cm,3.5cm},hmargin={2.5cm,2.5cm}]{geometry}
\usepackage[T1]{fontenc}
\usepackage[utf8]{inputenc}
\usepackage{stmaryrd}
\usepackage{mathtools}
\usepackage{graphicx}
\usepackage{framed}
\usepackage[normalem]{ulem}
\usepackage{amsmath,amsthm}
\usepackage{physics}
\usepackage{tikz}
\usepackage{mathdots}
\usepackage{yhmath}
\usepackage{cancel}
\usepackage{color}
\usepackage{siunitx}
\usepackage{array}
\usepackage{multirow}
\usepackage{mathtools}
\usepackage{dsfont}
\usepackage{mathrsfs}

\usepackage{amsthm}
\usepackage{amssymb}
\usepackage{gensymb}
\usepackage{tabularx}
\usepackage{extarrows}
\usepackage{booktabs}
\usetikzlibrary{fadings}
\usetikzlibrary{patterns}
\usetikzlibrary{shadows.blur}
\usetikzlibrary{shapes}
\usepackage{bbm}
\usepackage{mathtools}
\usepackage{forest}
\usepackage{thmtools}
\usepackage{thm-restate}
\usepackage[shortlabels]{enumitem}
\usepackage{hyperref}
\usepackage{cleveref}

\declaretheorem[name=Theorem]{theorem}
\declaretheorem[name=Proposition,numberwithin=section]{prop}
\declaretheorem[name=Corollary]{coro}

\newtheorem{lemma}[prop]{Lemma}
\newtheorem{remark}[prop]{Remark}

\newtheorem{definition}[prop]{Definition}

\newtheorem{propA}{Proposition}[section]  
\newtheorem{remarkA}{Remark}[section]  
\newtheorem{claimA}{Claim}[section]  

\forestset{multiple directions/.style={for tree={#1}, phantom, for relative level=1{no edge, delay={!c.content/.pgfmath=content("!u")}, before computing xy={l=0,s=0}}},
    multiple directions/.default={},
    grow subtree/.style={for tree={grow=#1}}, 
    grow' subtree/.style={for tree={grow'=#1}}}
\usepackage{tikz}
\usepackage{capt-of}
\usepackage{tikz-cd}
\newcommand{\eps}{\varepsilon}
\usepackage[maxbibnames=99]{biblatex}
\bibliography{References.bib}

\makeatletter
\def\paragraph{\@startsection{paragraph}{4}%
  \z@\z@{-\fontdimen2\font}%
  {\normalfont\bfseries}}
\makeatother

\begin{document}
\title[Uniqueness and locality]{Uniqueness and locality of the ground state of the disordered Monomer-Dimer models on independently weighted Unimodular Bienaymé-Galton-Watson trees}
\author{Mihyun Kang and Mike Liu
}

\address{Graz University of Technology, Institute of Discrete Mathematics, Steyrergasse 30, 8010 Graz, Austria}

\email{kang@math.tugraz.at, mike.liu@tugraz.at}

\begin{abstract}
    
    Consider a finite graph $G=(V(G),E(G))$ and two continuous weight distributions $\omega$ and $\xi$, for which we assume that $\xi$ is lower bounded. Next, independently draw weights $(w(e))_{e \in E(G)}$ with distribution $\omega$ on edges and $(x(v))_{v \in V(G)}$ with distribution $\xi$ on vertices. The ground state of the monomer-dimer model on the weighted graph $G$ is a collection of edges (dimers) and vertices (monomers) such that every vertex is included in at most one monomer or dimer, and such that the sum of weights on its dimers and monomers is maximised. 
    Take $(G_n,o_n)_{n \in \mathbb{N}}$ to be a sequence of random rooted weighted graphs that converges locally to an independently weighted unimodular Bienaymé-Galton-Watson tree $(\mathbb{T},o)$ with edge-weight distribution $\omega$ and vertex-weight distribution $\xi$. By proving that the ground state of the monomer-dimer model on the tree $(\mathbb{T},o)$ is almost surely unique and locally approximable, we prove that the ground state of the monomer-dimer model on $(G_n,o_n)$ must converge locally to the ground state of the monomer-dimer model on $(\mathbb{T},o)$. This also implies a strong decorrelation property of monomer-dimer models on unimodular Bienaymé-Galton-Watson trees.
\end{abstract}

\maketitle

\section{Introduction}

In the field of study of disordered systems, physicists have made many remarkable conjectures with the replica method. In the case of \textit{replica symmetric} models, the justification of such methods relies on the locality of an underlying message-passing cavity equation (see the monograph \cite{Beliefphysics} by Marc Mézard and Andrea Montanari). For the monomer-dimer model on Bienaymé-Galton-Watson trees with constant weights at positive monomer activity, this behaviour was proven to be correct by Alberici and Contucci \cite{AlbericiContucci}. Furthermore, for the maximum weight matching problem on the tree, which corresponds to the limit as the monomer activity goes to zero, a criterion on the degree distribution of the tree for decorrelation to occur was established by Bordenave, Lelarge and Salez \cite{bordenave2012matchings}.
On the side of the dense regime, which corresponds to the random assignment problem, several conjectures formalised by Marc Mézard and Giorgio Parisi \cite{Mparisi1,Mparisi2} were later proven correct by David Aldous \cite{Aldous1992AsymptoticsIT,aldous2000zeta2}. The fact that the underlying belief propagation procedure admits a unique measurable solution was later established by Antar Bandyopadhyay \cite{logisticendogeny}. The soundness of the replica method for the random assignment problem for more general weight distributions
was then proven by Johan Wästlund \cite{WastlundReplica}, and the uniqueness in law of the belief propagation procedure was obtained by Justin Salez \cite{SalezCavitypseudo}.

In the case of i.i.d. weighted sparse graphs, little is proven about the replica symmetric behaviour of maximum matchings. David Gamarnik, Tomasz Nowicki and Grzegorz Swirszcz \cite{gamarnik2003maximum} proved that correlation decay holds for maximum weight matchings on Erd\H{o}s-R\'enyi and $d-$regular random graphs when the weights are exponentially distributed and gave a general criterion that links correlation decay to the uniqueness of solutions to an iterated operator corresponding to the stationary measure of the cavity equation.
A recent paper by Arnab Sen and Wai-Kit Lam \cite{correlationarnab} shows that correlation decay for the belief propagation procedure holds for graphs of degree bounded by three or trees of bounded degree when the weights are i.i.d. and exponentially distributed.

A main difficulty of establishing correlation decay in maximum matchings on sparse graphs comes from the fact that, as opposed to the dense regime, the space of weight distributions that one can allow is essentially every non-atomic integrable distribution. Analytical solutions are thus very difficult to compute outside of specific cases, such as the aforementioned exponential weight distribution. As far as we know, exponential weight distributions are the only case where one obtains explicit solutions to the cavity equation in the sparse regimes.
The space of random sparse trees is also much larger than the previously considered class of trees represented by the Poisson Weighted Infinite Trees with intensity of the form $t \mapsto t^d\mathbbm{1}_{t \geq 0}$ for $d>1$, whose underlying unweighted graph is essentially deterministic. Even when restricted to unimodular Bienaymé-Galton-Watson trees, this allows for an additional parameter for the degree distribution, which is infinite-dimensional.
As a result, correlation decay in sparse graphs has been left as an open problem (see \cite[Question 1.2]{correlationarnab}).

In a recent paper by Nathanaël Enriquez, Laurent Ménard, Mike Liu, and Vianney Perchet \cite{enriquez2024optimalunimodularmatching}, the local weak convergence of maximum matching problems on random graphs converging locally weakly to an i.i.d. weighted unimodular Bienaymé-Galton-Watson tree with integrable weight distribution was proven. Their proof was based on a generalisation of Aldous' original method in \cite{Aldous1992AsymptoticsIT,aldous2000zeta2} and goes to great lengths in order to circumvent the inability to establish correlation decay of the maximum matching problem. The uniqueness in law of solutions of the cavity equation was established without answering the question of the almost sure uniqueness of the cavity messages on the tree itself.

In this paper, we achieve two different goals. We first generalise the study of maximum weight matching problems in sparse random graphs to the ground state of monomer-dimer models on i.i.d. weighted Unimodular-Bienaymé-Galton-Watson trees by allowing vertices to have any lower-bounded weights and attributing to any matching the sum of weights of its unmatched vertices. This recovers maximum weight matchings by simply setting vertex weights to be identically $0$. Second, we give a short and concise proof that correlation decay occurs for the ground state of monomer-dimer models. This proof is based on constructing an analytical invariant over extremal families of messages that appeared in the criterion given by David Gamarnik, Tomasz Nowicki and Grzegorz Swirszcz in \cite{gamarnik2003maximum}. Let us  note that the study of correlation decay has been carried out for ground states of monomer-dimer models on i.i.d. weighted $\mathbb{Z}^d$ for a broad class of weight distributions by Krishnan Kesav and Ray Gourab \cite{KesavGourabCLTZd}. However, their setting is quite different from ours, as the unweighted graph studied in \cite{KesavGourabCLTZd} is not random and not a tree. Furthermore, when translated into the maximum weight monomer-dimer configuration problem, they require the edge-weight distribution to admit no lower bound. The proof there also leans into heavy unimodular techniques and does not rely on message-passing arguments. Similar results about the positive temperature regime were also obtained (see \cite{DisorderedMDCylinder,ArnabDisorderedMD}).

As a consequence, we strengthen the results of \cite{enriquez2024optimalunimodularmatching} and answer the question on correlation decay \cite[Question 1.2]{correlationarnab} in a positive manner in the case of unimodular Bienaymé-Galton-Watson trees. Instead of weak convergence obtained in \cite{enriquez2024optimalunimodularmatching}, we show that if a sequence of weighted random rooted graphs converges locally strongly to an i.i.d. weighted unimodular Bienaymé-Galton-Watson tree, then the ground states of the monomer-dimer model on the weighted graphs must also converge locally strongly to the ground state of the monomer-dimer on the weighted tree that we identify thanks to the almost sure unique solution to the cavity equation on the weighted tree. Furthermore, we remove the integrability condition on the weights, which was required in \cite{enriquez2024optimalunimodularmatching}. We now state our first theorem below. 

\begin{theorem}\label{th:convergenceMD}
    Let $(\mathbb{T},w,x,o)$ be a unimodular Bienaymé-Galton-Watson tree with reproduction law $\pi$ with edge weights $(w(e))_{e \in E(\mathbb{T})}$ with distribution $\omega$ and vertex weights $(x(v))_{v \in V(\mathbb{T})}$ with distribution $\xi$.
    Assume that:
    \begin{enumerate}[(i)]
    \item The vertex-weight distribution $\xi$ is lower bounded. In addition, it is either continuous or admits at most one atom at its minimum $m_{\xi}\in \mathbb{R}$.
    \item The edge-weight distribution $\omega$ is continuous. In addition, its support is locally convex under its maximum $M_{\omega} \in \mathbb{R} \cup \{+\infty\}$.
    \item The degree distribution $\pi$ admits a second moment.
    \end{enumerate}
    Then there almost surely exists a unique optimal monomer-dimer configuration $\mathbb{MD}$ on $(\mathbb{T},w,x,o)$ in the following sense:
    Assume that $(G_n,w_n,x_n,o_n)_{n \in \mathbb{N}}$ is a sequence of weighted random rooted graphs converging locally to $(\mathbb{T},w,x,o)$. Denote by $\mathrm{MD}_n$ any optimal monomer-dimer configuration on $G_n$. Then $(G_n,w_n,x_n,\mathrm{MD}_n,o_n)$ converges locally to $(\mathbb{T},w,x,\mathbb{MD},o)$.
\end{theorem}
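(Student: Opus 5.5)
The plan is to follow the message-passing (cavity) route. On a finite weighted tree, the optimal monomer-dimer configuration is encoded by messages on oriented edges. For a directed edge $(u\to v)$, let $h(u\to v)$ be the gain of the optimal configuration in the subtree of $u$ (away from $v$) when $u$ is allowed to be used, minus the gain when $u$ is removed. These messages satisfy the recursion
\[
h(u\to v)=\max\Bigl(x(u),\ \max_{y\sim u,\,y\neq v}\bigl(w(uy)-h(y\to u)\bigr)\Bigr),
\]
with the obvious modification at leaves. The optimal configuration then puts the dimer $uv$ exactly when $w(uv)>h(u\to v)+h(v\to u)$. It puts a monomer at $u$ when $x(u)$ beats all $w(uy)-h(y\to u)$. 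Continuity of $\omega$, and of $\xi$ away from its minimal atom, makes ties have probability zero.

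On the infinite tree $\mathbb{T}$ I would truncate at depth $k$ and run the recursion from the extremal boundary conditions: all messages at depth $k$ set to the lower bound $m_\xi$, or to $+\infty$. This gives two families $h^{-}_k$ and $h^{+}_k$. The operator is antitone, so the two families alternate in order and converge monotonically along even and odd $k$ to limits $\underline h\le\overline h$. These limits are the maximal and minimal solutions of the cavity equation, and every solution, including the one induced by any finite approximation, lies between them.

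The key step, and the main obstacle, is to show $\underline h=\overline h$ almost surely on every directed edge. I would build a Gamarnik–Nowicki–Swirszcz-type invariant from the joint law of the pair $(\underline h(u\to v),\overline h(u\to v))$. By the recursive structure of the unimodular BGW tree, this law is a fixed point of a bivariate distributional equation whose offspring law is the size-biased law $\hat\pi$. I would then compare a functional $\Phi$ of the extremal pair. One candidate is $\mathbb{P}(\overline h(u\to v)-\underline h(u\to v)>t)$ integrated against the law of $w$. Another is $\mathbb{E}[F_\omega(\overline h+\overline h')-F_\omega(\underline h+\underline h')]$, where $F_\omega$ is the CDF of $\omega$ and $h,h'$ are independent copies. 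I would compute $\Phi$ in two ways: once through the recursion, and once using unimodularity (mass transport) to exchange the roles of the two endpoints of an edge. Equating the two expressions should force $\Phi=0$.

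Several hypotheses enter here. The second moment of $\pi$ makes $\hat\pi$ have a first moment, so the mass transport computation is finite. Local convexity of the support of $\omega$ ensures that a gap between $\underline h$ and $\overline h$ produces a strictly positive probability that a dimer decision changes, which gives the strict inequality. The lower bound on $\xi$ keeps the messages bounded below, so no integrability of $w$ is needed; I would work with the CDFs only. This two-ways computation is where I expect most of the difficulty, especially handling the possible atom of $\xi$ at $m_\xi$ and the case $M_\omega=+\infty$.

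Once uniqueness holds, the root messages are almost surely limits of depth-$k$ computations with arbitrary boundary data. Hence $\mathbb{MD}$, defined through the unique messages, is almost surely a function of the weighted ball $B_k(o)$ up to an error $\varepsilon_k\to0$ in probability. This is correlation decay. For the finite graphs, local convergence gives that $B_{k}(G_n,o_n)$ is a tree with high probability, coupled to $B_k(\mathbb{T},o)$ including the weights. The true optimal messages on $G_n$ restricted to this ball are some solution of the recursion with unknown boundary data. They are therefore sandwiched between the depth-$k$ extremal iterates, which agree on the root's incident edges up to $\varepsilon_k$. To make the sandwich valid on a finite graph with cycles, I would show that local optimality of $\mathrm{MD}_n$ on a tree-like ball pins down the root's decision by the same monotone comparison, and use continuity of the weights to exclude ties. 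Consequently $\mathrm{MD}_n$ near $o_n$ agrees with $\mathbb{MD}$ near $o$ with probability tending to one, for any choice of optimal $\mathrm{MD}_n$. This gives the claimed local convergence of $(G_n,w_n,x_n,\mathrm{MD}_n,o_n)$.
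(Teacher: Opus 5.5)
Your architecture matches the paper's: cavity messages, extremal boundary values $m_\xi$ and $+\infty$, the alternating monotone sandwich, then a boundary-condition lemma plus continuity of $\omega$ to transfer to $G_n$. However, the step you flag as the main obstacle is the real content of the theorem. You do not carry it out, and the functionals you propose cannot do it.

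\textbf{Why your candidates fail.} Both compare a lower--lower pair with an upper--upper pair, e.g.\ $\Phi=\mathbb{P}(W>\underline h+\underline h')-\mathbb{P}(W>\overline h+\overline h')$. This is $\ge 0$ for free. But for a symmetric pair, exchanging the roles of the two endpoints (conditioning on one summand or the other) returns literally the same expression. So there is nothing to equate and no reason for $\Phi$ to vanish; ``equating the two expressions should force $\Phi=0$'' is a hope, not an argument. Such pairs are also not what the extremal configurations see. Under a single boundary condition, the two messages across an edge see the boundary at distances of opposite parity, so one is a lower and the other an upper extremal. Separately, the joint law of $(\underline h,\overline h)$ is not needed: since $\underline h\le\overline h$ almost surely, equality of the two marginal laws suffices.

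\textbf{The missing idea.} It is the mixed invariant $\mathbb{P}(Z_-+Z_+<W)$, with $Z_\pm\sim\zeta_\pm$ and $W\sim\omega$ independent. Condition on $Z_+=t$ and use the recursive distributional equation for $h_+$. This gives $\mathbb{P}(Z_-<W-t)=\mathbb{E}[h_-(W-t)]=F(h_+(t)/h_X(t))$, where $F(u)=1-\hat\phi^{-1}(u)$. Conditioning on $Z_-$ instead gives the same with the signs swapped. Hence the single functional $\Psi(h)=\int F(h/h_X)\,\mathrm{d}h$ takes the same value at $h_+$ and $h_-$. This is your endpoint exchange, and it is non-trivial only for the mixed pair.

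After the quantile substitution $u=h(t)$, $\Psi(h)=\int_0^1 F\bigl(u/h_X(h^{-1}(u))\bigr)\,\mathrm{d}u$. When $h_+\le h_-$ the integrands are pointwise ordered ($F$ decreasing, $h_X$ nondecreasing). Equal integrals therefore force $h_X\circ h_+^{-1}=h_X\circ h_-^{-1}$ almost everywhere.

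\textbf{What is still needed after that.} This is not yet $h_+=h_-$. Two further steps remain:
\begin{enumerate}
\item When $\xi$ has an atom, you need a separate comparison of the atom masses $h_\pm(m_\xi)$. Both are positive, and an integration by parts using $F'<0$ shows they coincide.
\item You need a support argument, which is where local convexity of $S_\omega$ really enters. It propagates a single point $t_0$ with $0<h_+(t_0)=h_-(t_0)<1$ through the recursion to all of $[m_\xi,M_\omega-m_\xi]$. It also shows that $\zeta_\pm$ have no flat pieces there, so the almost-everywhere identity can be exploited even when $h_X$ is singular.
\end{enumerate}
Your account of that hypothesis (turning a gap into a strict inequality) does not correspond to a step that would work.
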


Using Skorokhod's representation theorem, we get the equivalent proposition for local weak convergence.

\begin{coro}\label{coro:weakconvergenceMD}
   Let $(\mathbb{T},w,x, \mathbb{MD},o)$ be as in Theorem~\ref{th:convergenceMD}.
    Let $(G_n,o_n)_{n \in \mathbb{N}}$ be any sequence of rooted random graphs that converge locally weakly to $(\mathbb{T},o)$. Draw i.i.d. edge and vertex weights $w_n$ and $x_n$ on $G_n$ according to distribution $\omega$ and $\xi$. Denote by $\mathrm{MD}_n$ any optimal monomer-dimer configuration on $G_n$. Then $(G_n,\mathrm{MD}_n,o_n)$ converges locally weakly to $(\mathbb{T},\mathbb{MD},o)$. 
\end{coro}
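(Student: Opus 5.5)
The corollary follows from Theorem~\ref{th:convergenceMD} by a Skorokhod coupling, as the text before the statement indicates. First I lift the weak convergence of the unweighted graphs to the weighted setting. Given $(G_n,o_n)\to(\mathbb{T},o)$ locally weakly, attach i.i.d. weights $w_n\sim\omega$ on edges and $x_n\sim\xi$ on vertices. I claim that $(G_n,w_n,x_n,o_n)$ then converges locally weakly to $(\mathbb{T},w,x,o)$. To check this, fix a radius $r$ and a bounded function of the ball $B_r$ that is continuous in the weights. Conditionally on the unweighted ball $B_r(G_n,o_n)$, the law of its weights depends only on the isomorphism class of that ball, so the expectation of the function is a function of the unweighted ball. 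Convergence of the finitely supported laws of unweighted $r$-balls then gives the claim.

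Next, the space of (weighted, marked) rooted graphs with the local topology is Polish, so Skorokhod's representation theorem applies. It yields, on one probability space, copies $(\tilde G_n,\tilde w_n,\tilde x_n,\tilde o_n)$ with the same laws that converge almost surely, hence locally in probability (strongly), to a copy $(\tilde{\mathbb{T}},\tilde w,\tilde x,\tilde o)$ of the weighted unimodular Bienaymé–Galton–Watson tree. On this coupling I choose, for each $n$, an optimal configuration $\widetilde{\mathrm{MD}}_n$ on $\tilde G_n$ as a measurable function of the weighted graph. If $\mathrm{MD}_n$ in the statement is chosen by some fixed measurable rule, I use the same rule, so that $(\tilde G_n,\widetilde{\mathrm{MD}}_n,\tilde o_n)$ has the law of $(G_n,\mathrm{MD}_n,o_n)$. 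A measurable optimizer exists because each $G_n$ is finite, so one can take the argmax over finitely many configurations with a deterministic tie-break. Ties have probability zero anyway, given the continuity assumptions, apart from atoms of $\xi$ at its minimum.

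Applying Theorem~\ref{th:convergenceMD} to the coupled sequence gives that $(\tilde G_n,\tilde w_n,\tilde x_n,\widetilde{\mathrm{MD}}_n,\tilde o_n)$ converges locally to $(\tilde{\mathbb{T}},\tilde w,\tilde x,\widetilde{\mathbb{MD}},\tilde o)$. Local (in-probability) convergence implies local weak convergence, and forgetting the weights is a continuous map. Hence $(G_n,\mathrm{MD}_n,o_n)$ converges locally weakly to $(\mathbb{T},\mathbb{MD},o)$, because the limit law is determined by the law of the weighted tree: $\mathbb{MD}$ is almost surely a unique function of $(\mathbb{T},w,x)$.

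The main point requiring care is the measurability and law-preservation step. One must make sure that "any optimal configuration" is realised identically in law on the Skorokhod copies. This is ultimately harmless, since almost surely all choices of optimizer are handled by the theorem, which applies to every optimal $\mathrm{MD}_n$. A secondary point is verifying that the notion of local convergence in Theorem~\ref{th:convergenceMD} is implied by almost sure local convergence of the Skorokhod copies.
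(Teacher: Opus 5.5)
Your overall strategy (a Skorokhod coupling, Theorem~\ref{th:convergenceMD} applied on the coupled space, then forgetting the weights) is what the paper's one-line justification has in mind. However, the step where you feed the Skorokhod copies into Theorem~\ref{th:convergenceMD} does not go through as written. This, rather than measurability, is the real issue.

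In the paper's topology on weighted graphs (Definition~\ref{def:locconv}), convergence means the weighted balls are eventually \emph{exactly} isomorphic. With atomless edge weights that space is not separable: rooted graphs differing only in the weight of an edge at the root stay at distance bounded below. Moreover, the law of $(\mathbb{T},w,x,o)$ is not carried by any separable subset. So your claim that the space is Polish is false, and Skorokhod's theorem is not available there. Your test functions ``continuous in the weights'' implicitly use the other topology, the Polish one in which weights need only be close. In that topology the copies satisfy $\tilde w_n\to\tilde w$ and $\tilde x_n\to\tilde x$ on each ball, but the weights are generally never equal. That is \emph{not} the hypothesis of Theorem~\ref{th:convergenceMD}, contrary to what your closing remark suggests. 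Along your route you would need the $L^1$-local version of the theorem, which the paper only sketches in the remark after the proof of Theorem~\ref{th:convergenceMD}.

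A cleaner repair, which also makes your first step superfluous, runs as follows.
\begin{enumerate}
\item Apply Skorokhod only to the unweighted $(G_n,o_n)$, which live in a Polish space, to get $(\tilde G_n,\tilde o_n)\to(\tilde{\mathbb{T}},\tilde o)$ almost surely.
\item Equip $\tilde{\mathbb{T}}$ with i.i.d.\ weights independent of everything else.
\item Let $R_n$ be the largest $r\le n$ (if any) with $B_r(\tilde G_n,\tilde o_n)\cong B_r(\tilde{\mathbb{T}},\tilde o)$, and fix such an isomorphism measurably.
\item Transport the weights of $\tilde{\mathbb{T}}$ onto $B_{R_n}(\tilde G_n,\tilde o_n)$ through this isomorphism, and use fresh independent weights on the rest of $\tilde G_n$.
\end{enumerate}
Conditionally on the unweighted graphs, the weights on $\tilde G_n$ are still i.i.d.\ with laws $\omega$ and $\xi$. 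Hence $(\tilde G_n,\tilde w_n,\tilde x_n,\tilde o_n)$ has the law of $(G_n,w_n,x_n,o_n)$. Since $R_n\to\infty$ almost surely, the weighted balls are eventually exactly isomorphic, so Theorem~\ref{th:convergenceMD} applies verbatim.

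As for the point you call the main one, it is harmless for a simpler reason than the one you give. Two distinct configurations on a finite graph have distinct dimer sets, so their weights differ by a nontrivial signed sum of independent atomless edge weights. Hence the optimum on $G_n$ is almost surely unique, even when $\xi$ has an atom at $m_\xi$. So $\mathrm{MD}_n$ is almost surely a fixed measurable function of the weighted graph, and its law is automatically preserved under the coupling.
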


When $\xi=\delta_0$, Corollary~\ref{coro:weakconvergenceMD} is one of the main findings of \cite{enriquez2024optimalunimodularmatching}.
We also establish correlation decay for the ground state of monomer-dimer models themselves, which we formalise in the next theorem.

\begin{theorem}\label{th:decay}
    Let $(\mathbb{T},w,x, \mathbb{MD},o)$ be as in Theorem~\ref{th:convergenceMD}.
    Let $(G_n,w_n,x_n)_{n \in \mathbb{N}}$ be a sequence of weighted random graphs and $\mathrm{MD}_n$ be any optimal monomer-dimer configuration on $(G_n,w_n,x_n)$.

    Take $o_n,o_n'$ two vertices in $G_n$. Assume that:
    \begin{enumerate}
        \item The graph distance between $o_n$ and $o_n'$ goes to infinity as $n \to \infty$.
        \item The pair \[\bigg((G_n,w_n,x_n,o_n),(G_n,w_n,x_n,o_n')\bigg)\] jointly locally converges to two independent copies of $(\mathbb{T},w,x,o)$.
    \end{enumerate}
    Then the pair
    \[ \bigg( (G_n,w_n,x_n,\mathrm{MD}_n,o_n),(G_n,w_n,x_n,\mathrm{MD}_n,o_n') \bigg)  \]
    jointly locally converges to two independent copies of $(\mathbb{T},w,x,\mathbb{MD},o)$.
\end{theorem}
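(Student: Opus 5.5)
We derive Theorem~\ref{th:decay} from the local approximability that underlies Theorem~\ref{th:convergenceMD}, and we take that approximability as given. The proof of Theorem~\ref{th:convergenceMD} gives the following. For every $r\ge 0$ and $\varepsilon>0$ there are a radius $R=R(r,\varepsilon)\ge r$ and a measurable map $F_{R}$ with this property: given a finite weighted rooted graph, $F_R$ looks only at the weighted ball of radius $R$ around the root and outputs a monomer-dimer configuration on the ball of radius $r$. Moreover, $\mathbb{P}\big(F_R(B_R(\mathbb{T},o)) \neq \mathbb{MD}\cap B_r(\mathbb{T},o)\big)<\varepsilon$. Concretely, $F_R$ runs the cavity recursion from the leaves of the $R$-ball, starting from the two extremal boundary conditions. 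By almost sure uniqueness of the messages, the two resulting families of messages agree on $B_r$ with probability at least $1-\varepsilon$. On that event, every optimal configuration on any graph whose $R$-ball is isomorphic to the tree's $R$-ball coincides on $B_r$ with the configuration read off from the messages. Indeed, optimal configurations on finite graphs are sandwiched between the extremal boundary messages, by the monotonicity of the cavity operator.

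The key step is to make this statement uniform in the finite graphs. The event $E_R$ that the two extremal message families agree on $B_r$ is determined by the $R$-ball, and it is a continuity set because $\omega$ and $\xi$ are continuous, with at most an atom at the minimum of $\xi$, which is handled by the tie-breaking convention. On $E_R$, whenever $B_R(G_n,o_n)$ is a tree whose ball coincides with that of a realisation in $E_R$, we have $\mathrm{MD}_n\cap B_r(G_n,o_n)=F_R(B_R(G_n,o_n))$. This holds by the sandwich argument, whatever $\mathrm{MD}_n$ does outside the ball. Hence $\mathbb{P}\big(\mathrm{MD}_n\cap B_r(G_n,o_n)\neq F_R(B_R(G_n,o_n))\big)\le \mathbb{P}(B_R(G_n,o_n)\notin E_R)$. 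By local convergence this tends to $\mathbb{P}(E_R^c)<\varepsilon$, and the same bound holds at $o_n'$.

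Now take bounded continuous test functions $f,g$ of the $r$-balls decorated with configurations. Since $d(o_n,o_n')\to\infty$, the two $R$-balls are eventually disjoint. By hypothesis (2), the pair $(B_R(G_n,o_n),B_R(G_n,o_n'))$ converges in law to two independent $R$-balls of $\mathbb{T}$. Applying the continuous-mapping theorem to $F_R$, which is continuous off a null set, we get
\[
\mathbb{E}\big[f(F_R(B_R(G_n,o_n)))\,g(F_R(B_R(G_n,o_n')))\big]\to \mathbb{E}\big[f(F_R(B_R(\mathbb{T},o)))\big]\,\mathbb{E}\big[g(F_R(B_R(\mathbb{T},o)))\big].
\]
Replacing $F_R$ by $\mathrm{MD}_n$ on the left and by $\mathbb{MD}$ on the right costs at most $4\varepsilon\|f\|_\infty\|g\|_\infty$ in the limit. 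Letting $\varepsilon\to0$ gives joint convergence to independent copies.

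The main obstacle is the sandwich step: showing that, on $E_R$, \emph{any} optimal configuration on an arbitrary finite graph, which is only locally tree-like, is determined on $B_r$ by the $R$-ball. To prove it, I would write the optimality of $\mathrm{MD}_n$ through exact cavity values on $G_n$ with the vertex removed. On the tree part, these satisfy the same recursion as on $\mathbb{T}$. Their values on the sphere of radius $R$ are then bounded between the extremal inputs: the lower bound $m_\xi$ on vertex weights and the corresponding upper bound. Antimonotonicity of the recursion then transfers these bounds inward with alternating order, and on $E_R$ they collapse to the tree messages.
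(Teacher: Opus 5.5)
Your architecture is the paper's, and you spell out more than its sketch does. With high probability the configuration on $B_r$ is a function of the weighted $R$-ball, uniformly over all graphs sharing that ball, and hypothesis (2) then carries independence to the limit. Your finite-graph sandwich uses exact cavity values $\mathrm{OPT}(G_n\setminus S)-\mathrm{OPT}(G_n\setminus (S\cup\{v\}))$. This is a legitimate alternative to Lemma~\ref{lem:matchingbord}, which instead freezes boundary messages at $x(v)$ or $+\infty$ according to how the given $\mathrm{MD}_n$ treats each boundary vertex. Your version does not depend on the chosen optimum: the recursion is exact on any graph, and boundary values lie in $[x(v),+\infty]\subseteq[m_\xi,+\infty]$. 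You should, however, state the dimer criterion on a graph with cycles. Write $G_n-e$ for $G_n$ with the edge $e=\{u,v\}$ deleted. Then $e$ lies in every (resp.\ no) optimal configuration iff $w(e)$ is larger (resp.\ smaller) than $[\mathrm{OPT}(G_n-e)-\mathrm{OPT}(G_n\setminus\{u\})]+[\mathrm{OPT}(G_n\setminus\{u\})-\mathrm{OPT}(G_n\setminus\{u,v\})]$, and both brackets are cavity values of the form your recursion handles.

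The gap is the event $E_R$. You define it as the event that the two extremal message families \emph{agree} on $B_r$, and assert $\mathbb{P}(E_R^c)<\varepsilon$ ``by almost sure uniqueness of the messages''. Proposition~\ref{prop:gibbsuniqueness} only gives $Z^R_+\to Z$ and $Z^R_-\to Z$ on each directed edge. Convergence to a common limit does not make the two families coincide at any finite depth: a boundary discrepancy can be passed up an argmax path, possibly shrinking, without ever being absorbed by a monomer term. So exact agreement with probability close to one is not established.

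The missing step is the one the paper performs with the events $A_{\mathrm{bad}}(u,v)$ and the function $g_\omega$. Put $\underline{Z}=\min(Z^R_+,Z^R_-)$ and $\overline{Z}=\max(Z^R_+,Z^R_-)$. Let $E_R$ be the event that for every edge $\{u,v\}$ of $B_r$, $w(u,v)\notin[\underline{Z}(u,v)+\underline{Z}(v,u),\ \overline{Z}(u,v)+\overline{Z}(v,u)]$. Mixing $\min$ and $\max$ is necessary because $(u,v)$ and $(v,u)$ have opposite parity of distance to the boundary, so $Z^R_+(u,v)+Z^R_+(v,u)$ is not a one-sided bound. These intervals shrink to the point $Z(u,v)+Z(v,u)$. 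Moreover, $w(u,v)$ is independent of $(Z(u,v),Z(v,u))$, which are computed on the two components left after deleting $\{u,v\}$, and $\omega$ is continuous; hence $\mathbb{P}(E_R^c)\to 0$. On this $E_R$ your sandwich makes the criterion strict and identical for every graph sharing the $R$-ball. The rest of your argument (continuous mapping, then the $O(\varepsilon)$ replacement) goes through unchanged.
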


\begin{remark}
    The additional conditions (1)--(2) in Theorem~\ref{th:decay} are necessary to avoid pathological cases such as $G_n$ being $n$ copies of the same subcritical Bienaymé-Galton-Watson tree.
\end{remark}

Since the correlation between states of edges and vertices in the ground state of the monomer-dimer model decays, this also allows us to deduce laws of large numbers on these ground states as a corollary. 
\begin{coro}\label{coro:averageweight}
    Let us set ourselves in the context of local weak convergence of Corollary~\ref{coro:weakconvergenceMD}.
    Assume that the conditions of Theorem~\ref{th:decay} hold for two uniformly chosen vertices $o_n$ and $o_n'$ in $G_n$, with (2) replaced with local weak convergence. Let $\mathrm{MD}_n$ be any optimal monomer-dimer configuration on $G_n$. Let $f$ be any bounded local function on rooted marked graphs. Then
    \begin{align*}
        \frac{1}{|V(G_n)|}\left( \sum_{v \in V(G_n)} f(G_n,w_n,x_n,\mathrm{MD}_n,v)  \right) \quad \underset{ n \rightarrow +\infty}{\overset{\mathbb{P}}{\longrightarrow}} \quad \mathbb{E}\left[ f(\mathbb{T},w,x,\mathbb 
        {MD},o)\right].
    \end{align*}
\end{coro}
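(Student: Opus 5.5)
The plan is the standard second-moment argument, using Theorem~\ref{th:decay} to make two uniformly chosen roots asymptotically independent.

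Write
\[
Y_n=\frac{1}{|V(G_n)|}\sum_{v}f(G_n,w_n,x_n,\mathrm{MD}_n,v), \qquad \mu=\mathbb{E}[f(\mathbb{T},w,x,\mathbb{MD},o)].
\]
The goal is to show $\mathbb{E}[Y_n]\to\mu$ and $\mathbb{E}[Y_n^2]\to\mu^2$. Together these give $\mathbb{E}[(Y_n-\mu)^2]\to0$, and Chebyshev's inequality then yields convergence in probability.

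For the first moment, take $o_n$ uniform in $V(G_n)$ conditionally on $(G_n,w_n,x_n)$. Then
\[
\mathbb{E}[Y_n\mid G_n,w_n,x_n]=\mathbb{E}[f(G_n,w_n,x_n,\mathrm{MD}_n,o_n)\mid G_n,w_n,x_n].
\]
Taking expectations, $\mathbb{E}[Y_n]=\mathbb{E}[f(G_n,w_n,x_n,\mathrm{MD}_n,o_n)]$. By Corollary~\ref{coro:weakconvergenceMD}, $(G_n,w_n,x_n,\mathrm{MD}_n,o_n)$ converges locally weakly to $(\mathbb{T},w,x,\mathbb{MD},o)$. The weighted version follows the same way as the corollary via Theorem~\ref{th:convergenceMD}. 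Since $f$ is bounded and local, hence continuous for the local topology, this gives $\mathbb{E}[Y_n]\to\mu$.

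For the second moment, take $o_n,o_n'$ independent and uniform given $(G_n,w_n,x_n)$. Then
\[
Y_n^2=\mathbb{E}\bigl[f(G_n,w_n,x_n,\mathrm{MD}_n,o_n)\,f(G_n,w_n,x_n,\mathrm{MD}_n,o_n')\mid G_n,w_n,x_n\bigr].
\]
By hypothesis, conditions (1)--(2) of Theorem~\ref{th:decay} hold for this pair, with (2) in the local weak sense. So the pair of marked rooted graphs converges jointly in distribution to two independent copies of $(\mathbb{T},w,x,\mathbb{MD},o)$.

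One subtlety is that Theorem~\ref{th:decay} is stated for (almost sure) local convergence, while here only local weak convergence is assumed. I would handle this with Skorokhod's representation theorem, exactly as Corollary~\ref{coro:weakconvergenceMD} is obtained from Theorem~\ref{th:convergenceMD}. On a common probability space one realises $(G_n,w_n,x_n,o_n,o_n')$ so that the pair of rooted weighted graphs converges almost surely locally to two independent copies of the tree. The distance condition (1) passes to this realisation because it is a statement in probability about the law. One then applies Theorem~\ref{th:decay} on that space. Because $\mathrm{MD}_n$ may be any optimal configuration, including one chosen measurably in the realisation, the law of the marked pair is unaffected. This coupling is the only delicate step; everything else is routine.

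Given the joint convergence, the product $f\otimes f$ is a bounded continuous function on pairs. Hence
\[
\mathbb{E}[Y_n^2]=\mathbb{E}\bigl[f(\cdot,o_n)f(\cdot,o_n')\bigr]\to\mu^2,
\]
and therefore
\[
\mathbb{E}[(Y_n-\mu)^2]=\mathbb{E}[Y_n^2]-2\mu\,\mathbb{E}[Y_n]+\mu^2\to0.
\]
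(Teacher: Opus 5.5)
Your proposal is correct and is the standard second-moment argument that the paper invokes but omits. The first moment comes from local weak convergence at a single uniform root, and the decorrelation of two independent uniform roots from Theorem~\ref{th:decay} gives $\mathbb{E}[Y_n^2]\to\mu^2$, so $\mathbb{E}[(Y_n-\mu)^2]\to 0$ and Chebyshev concludes. The paper phrases this as covariance decay in the distance $d$, at the rate at which $Z_+^d$ and $Z_-^d$ merge; your black-box use of Theorem~\ref{th:decay} (the corollary's hypotheses are tailored for exactly this) needs no rate. Your Skorokhod step is at the same level of rigour as the paper's own passage from Theorem~\ref{th:convergenceMD} to Corollary~\ref{coro:weakconvergenceMD}.
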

\begin{remark}
    A law of large numbers can then be deduced for the total weight. Indeed, fix some constant $K>0$ and set the localising function $\tau_K: t \mapsto \max(-K,\min(K,t))$. Apply Corollary~\ref{coro:averageweight} to the function
    \[f_K:(G,w,x,\mathrm{MD},o) \mapsto \tau_K\left(x(o)\mathbbm{1}_{o \text{ is a monomer in } \mathrm{MD}} + \sum_{v \sim o}\frac{1}{2}w(o,v)\mathbbm{1}_{ \{o,v\} \text{ is a dimer in }  \mathrm{MD}}\right),  \]    
    using monotone convergence theorem and uniform integrability to swap the order of integration, this becomes a law of large numbers on the average weight of the optimal monomer-dimer configuration on $G_n$. As $n$ goes to infinity, the renormalised total weight given by
    \begin{align*}
        \frac{1}{|V(G_n)|}\left(\sum_{v \text{ is a monomer in } \mathrm{MD}_n}x_n(v)+ \sum_{e \text{ is a dimer in } \mathrm{MD}_n} w_n(e)  \right)
    \end{align*}
    converges in probability to
    \begin{align*}
        \mathbb{E}\left[ x(o)\mathbbm{1}_{o \text{ is a monomer in }\mathbb{MD}}+ \frac{1}{2}\sum_{v \sim o} w(o,v)\mathbbm{1}_{\{o,v\}\text{ is a dimer in } \mathbb{MD}} \right].
    \end{align*}
    \end{remark}
Since we do not assume integrability of the weights, the right hand side can be infinite and the convergence still holds, where convergence in probability to $+\infty$ means that for any constants $C>0,\eps>0$, there exists $n$ large enough from which the probability that the random variable on the left is bigger than $C$ is at least $1-\eps$. 

In Section~\ref{sec:prelim}, we specify the notations and the precise definitions we use in this paper. The objective of Section~\ref{sec:messagepassing} is to exhibit the message-passing procedure that builds the unique ground state of the monomer-dimer model on generically weighted finite trees.  Section~\ref{sec:subcritical} gives the proof, whose main idea is derived from \cite{gamarnik2003maximum}, of our main theorems conditionally on the validity of Proposition~\ref{prop:mainprop}, which reflects the uniqueness in distribution of the iterated cavity equation. The heart and main contribution of the paper then lies in Section~\ref{sec:mainprop}, where we provide a short proof of this uniqueness.

\bigskip

\paragraph{Acknowledgements}
Mike Liu would like to thank Arnab Sen for helpful discussions around correlation decay, and also Nathanaël Enriquez and Laurent Ménard for going over the first draft of Section~\ref{sec:mainprop}. This research was funded by the Austrian Science Fund (FWF) [10.55776/F1002, 10.55776/I6502]. For the purpose of open access, the author has applied a CC BY public copyright licence to any Author Accepted Manuscript version arising from this submission.

\section{Preliminaries}\label{sec:prelim}
We let $\mathbb{N}_0:=\mathbb N\cup \{0\}$. For any $\ell \in \mathbb N$ let $\mathbb{N}_{\geq \ell}:=\{\ell,\ell +1,\ldots\}$ and $[\ell]:=\{1,2,\ldots,\ell\}$.     

Let $G$ be a graph with vertex set $V(G)$ and edge set $E(G) \subseteq \binom{V(G)}{2}$. For $e=\{u,v\}\in E(G)$, we say that $u$ and $v$ are adjacent, denoted by $u\sim v$, and that $u$ and $v$ are incident to $e$. We often write $v\in G$ and $\{u,v\}\in G$ instead of $v\in V(G)$ and $\{u,v\}\in E(G)$. For a subgraph $G'$ of $G$, we denote by $G\setminus G'$ the graph with vertex set $V(G)\setminus V(G')$ and edge set $E(G)\setminus E(G')$. For $U \subseteq V(G)$, we let $G\setminus U$ denote the graph with vertex set $V(G)\setminus U$ and edge set $E(G) \cap \binom{V(G)\setminus U}{2}$. 

A (possibly infinite) graph $G$ is {\em locally finite} if every vertex of $G$ has finite degree. A {\em rooted graph} is a pair $(G,o)$, where $G$ is a locally finite graph and $o$ is a vertex of $G$. Two rooted graphs $(G,o)$ and $(G',o')$ are {\em isomorphic}, which is denoted by $(G,o) \cong (G',o')$, if there exists a bijection $\iota:V(G)\to V(G')$ such that $\{u,v\}\in E(G)$ iff  $\{\iota(u),\iota(v)\}\in E(G')$  and $\iota(o)=o'$. Given $H \in \mathbb{N}_0$ and a rooted graph $(G,o)$, the ball $B_{H}(G,o)$ is the subgraph of $G$ induced on the vertices with distance at most $H+1$ from $o$\footnote{Such a subgraph is often denoted by $B_{H+1}(G,o)$ in graph theory, but by $B_{H}(G,o)$ in this paper, because $H$ corresponds to the depth or the length of paths, used to update the message on an edge $\{o,v\}\in G$.}. The ball $B_{H}(G,o)$ is considered as a rooted graph with root $o$. We also call $B_{H}(G,o)$ the $H-$neighbourhood of $(G,o)$ and call $\partial B_H(G,o):=B_{H}(G,o)\setminus B_{H-1}(G,o)$ the $H-$boundary of $(G,o)$ or simply the boundary if it is clear from context. 

For any rooted graph $G$, we can define a set of directed edges $\overset{\rightarrow}{E}(G)$ by duplicating each edge and giving opposite orientations. Formally, $\overset{\rightarrow}{E}(G) := \bigcup_{\{u,v\} \in E(G)} \{ (u,v),(v,u) \}$. 

In a rooted tree $(T,o)$, we will say that a directed edge $\overset{\rightarrow}{e} \in \overset{\rightarrow}{E}(G)$ is pointing {\em outwards} if there exists a directed path $(\overset{\rightarrow}{e}_0,\ldots,\overset{\rightarrow}{e}_k)$ such that the source of $\overset{\rightarrow}{e}_0$ is $o$, and $\overset{\rightarrow}{e}_k=\overset{\rightarrow}{e}$. Inversely, we will say that the directed edge $\overset{\rightarrow}{e}$ is pointing {\em inwards} if there exists a directed path $(\overset{\rightarrow}{e}_0,\ldots,\overset{\rightarrow}{e}_k)$ such that $\overset{\rightarrow}{e}_0=\overset{\rightarrow}{e}$ and the endpoint of $\overset{\rightarrow}{e}_k$ is $o$.

\smallskip
\begin{definition}
    For a graph $G$, we can additionally equip edges and vertices with weights by considering two functions $w: E(G) \to \mathbb{R}$ and $x: V(G) \to \mathbb{R}$. We will then say that $(G,w,x)$ is a weighted graph.
    In order to alleviate notations, we will often write $w(u,v)$ or $w(v,u)$ to be the same value $w(\{u,v\})$.
\end{definition}

Two weighted rooted graphs $(G,w,x,o)$ and $(G',w',x',o')$ are {\em isomorphic}, which is denoted by $(G,w,x,o) \cong (G',w',x',o')$, if there exists an isomorphism $\iota:V(G)\to V(G')$ that additionally satisfies $w(\{u,v\})=w'(\{\iota(u),\iota(v)\})$ for every $\{u,v\}\in E(G)$ and $x(v)=x'(\iota(v))$ for every $v\in V(G)$.

On a rooted graph, we can define a monomer-dimer configuration as follows.
\begin{definition}
    For any graph $G$, we say that a subset of edges $\mathrm{MD} \subseteq E(G)$ is a {\em monomer-dimer configuration} on $G$, if for any vertex $v \in G$, there exists at most one vertex $u \in G$ adjacent to $v$ such that $\{u,v\} \in \mathrm{MD}$. We then call every element of  $\mathrm{MD}$ a {\em dimer} in $\mathrm{MD}$, and every vertex $v \in G$ such that there exists no element of $\mathrm{MD}$ incident to $v$ a {\em monomer} in $\mathrm{MD}$. 
\end{definition}

We can now define optimality of monomer-dimer configurations on a finite weighted graph as a maximiser of the sum of weights on monomers and dimers

\begin{definition}
    A monomer-dimer configuration $\mathrm{MD}$ on a finite weighted graph $(G,w,x)$ is said to be {optimal} if it maximises
    \[  \sum_{ e \text{ is a dimer in } \mathrm{MD}} w(e)+ \sum_{v \text{ is a monomer in }\mathrm{MD}} x(v).  \]
    We write $\mathrm{MD}_{\mathrm{opt}}(G,w,x)$ to be any arbitrarily chosen optimal monomer-dimer configuration on $(G,w,x)$. 
\end{definition}

\begin{remark}
The optimal monomer-dimer configuration also corresponds to the ground states of the weighted monomer-dimer model on $G$.
\end{remark}
Next, we provide a definition of local convergence that is standard in the literature (see \cite{vanderHofstadBook2}).
\begin{definition}\label{def:locconv}
    We say that a sequence of rooted graphs $(G_n,o_n)$ converges locally to another rooted graph $(G,o)$ if for any $H\in \mathbb{N}_0$, there exists $N\in \mathbb{N}$ large enough so that for any $n \geq N$, we have $B_H(G_n,o_n) \cong B_H(G,o)$.
    This convergence induces a metrisable topology called {\em local topology} on the space of rooted graphs, and a corresponding distance is
    \[ d_{\mathrm{loc}}\left( (G,o),(G',o')\right)= \frac{1}{1+R}\]
    where $R:=\inf \{ H\in \mathbb{N}_0: B_H(G,o) \cong B_H(G',o')  \} $.
    
    For weighted graphs, we will mimic the definition by saying that $(G_n,w_n,x_n,o_n)$ converges locally to $(G,w,x,o)$ if for any $H\in \mathbb{N}_0$, there exists $N\in \mathbb{N}$ large enough so that for any $n \geq N$, we have
    $B_H(G_n,w_n,x_n,o_n) \cong B_H(G,w,x,o)$.
    
    The weak convergence resulting from this topology on random rooted graphs is then called {\em weak local convergence}.
\end{definition}
\begin{remark}
    Since weights are continuous, it would be more natural to define local convergence by allowing some leeway on weights. For instance, we could instead state that for every $\eps>0$ and $H\in \mathbb{N}_0$, there exists $N\in \mathbb{N}$ large enough so that for any $n \geq N$, we both have that
    $B_H(G_n,o_n) \cong B_H(G,o)$ and that the sum of the difference of weights through the corresponding graph isomorphism is bounded above by $\eps$. This results in a topology that we call $L^1-$local topology of weighted graphs.  We voluntarily choose a more restrictive definition to avoid technicalities, as converting the proof with this less restrictive distance does not change the main idea of our proof.
\end{remark}

Finally, we introduce the definition of a local function which only depends on a neighbourhood of the root.

\begin{definition}\label{def:locfunc}
    Let $\mathcal{G}^{\star}$ be the space of rooted weighted graphs. We say that a function 
    \[  f:\mathcal{G}^{\star} \to \mathbb{R}  \]
    is a local function if there exists $H\in \mathbb{N}_0$ such that $B_H(G,w,x,o) \cong B_H(G',w',x',o')$ implies $f(G,w,x,o)=f(G',w',x',o')$.
\end{definition}

Definition~\ref{def:locconv} and Definition~\ref{def:locfunc} can be extended to a rooted weighted graph equipped with a monomer-dimer configuration $(G,w,x,\mathrm{MD},o)$ by identifying a monomer-dimer configuration with its membership function $e \in E(G) \mapsto \mathbbm{1}_{e \text{ is a dimer in }\mathrm{MD}}$. 
\subsection{Unimodular Bienaymé-Galton-Watson trees}
In this subsection, we introduce {\em Unimodular Bienaymé-Galton-Watson trees (UBGW)} along with models of random graphs that converge locally in law to these trees. Weights on edges and vertices will always be drawn independently according to distributions $\omega$ and $\xi$. These results being common folklore in the literature, we draw the presentation from \cite{enriquez2024optimalunimodularmatching}.

Let $\pi$ be a probability measure on $\mathbb{N}_0$ with finite expectation $E > 0$. Let $\hat{\pi}$ be the size-biased version of $\pi$, that is, $\forall k \in \mathbb{N}_0$, $\hat{\pi} (k) =\frac{k}{E} \pi (k)$. 

The vertex-rooted UBGW tree is the random tree $(\mathbb T,o)$ with the following law:
    \begin{itemize}
        \item The number of children of vertices in $\mathbb T$ are all independent.
        \item The number of children of the root $o$ is distributed according to $\pi$.
        \item Every non-root vertex has a number of children distributed according to $\hat \pi$.
    \end{itemize}
The random tree $(\mathbb T, o)$ is a vertex-rooted unimodular random graph. See Figure~\ref{fig:VUBGW} for an illustration.
\tikzset{every picture/.style={line width=0.75pt}} 
\begin{figure}[!ht]
\centering
\begin{tikzpicture}[x=0.75pt,y=0.75pt,yscale=-.8,xscale=.9]
\draw    (299.1,0.3) -- (130,110) ;
\draw    (299.1,0.3) -- (250,110) ;
\draw    (299.1,0.3) -- (350,111.5) ;
\draw    (299.1,0.3) -- (450,110) ;
\draw    (130,110) -- (100,192) ;
\draw    (130,110) -- (160,193) ;
\draw    (350,111.5) -- (290,190) ;
\draw    (350,111.5) -- (350,191) ;
\draw    (350,111.5) -- (410,192) ;
\draw    (450,190) -- (450,110) ;
\draw    (100,192) -- (80,260) ;
\draw    (100,192) -- (120,260) ;
\draw    (290,190) -- (290,260) ;
\draw    (410,192) -- (380,260) ;
\draw    (410,192) -- (410,260) ;
\draw    (410,192) -- (440,260) ;
\draw  [fill={rgb, 255:red, 7; green, 0; blue, 0 }  ,fill opacity=1 ] (124.77,110) .. controls (124.77,107.02) and (127.11,104.6) .. (130,104.6) .. controls (132.89,104.6) and (135.23,107.02) .. (135.23,110) .. controls (135.23,112.98) and (132.89,115.4) .. (130,115.4) .. controls (127.11,115.4) and (124.77,112.98) .. (124.77,110) -- cycle ;
\draw  [fill={rgb, 255:red, 7; green, 0; blue, 0 }  ,fill opacity=1 ] (293.88,0.3) .. controls (293.88,-2.68) and (296.21,-5.1) .. (299.1,-5.1) .. controls (301.99,-5.1) and (304.33,-2.68) .. (304.33,0.3) .. controls (304.33,3.28) and (301.99,5.7) .. (299.1,5.7) .. controls (296.21,5.7) and (293.88,3.28) .. (293.88,0.3) -- cycle ;
\draw  [fill={rgb, 255:red, 7; green, 0; blue, 0 }  ,fill opacity=1 ] (244.77,110) .. controls (244.77,107.02) and (247.11,104.6) .. (250,104.6) .. controls (252.89,104.6) and (255.23,107.02) .. (255.23,110) .. controls (255.23,112.98) and (252.89,115.4) .. (250,115.4) .. controls (247.11,115.4) and (244.77,112.98) .. (244.77,110) -- cycle ;
\draw  [fill={rgb, 255:red, 7; green, 0; blue, 0 }  ,fill opacity=1 ] (344.77,111.5) .. controls (344.77,108.52) and (347.11,106.1) .. (350,106.1) .. controls (352.89,106.1) and (355.23,108.52) .. (355.23,111.5) .. controls (355.23,114.48) and (352.89,116.9) .. (350,116.9) .. controls (347.11,116.9) and (344.77,114.48) .. (344.77,111.5) -- cycle ;
\draw  [fill={rgb, 255:red, 7; green, 0; blue, 0 }  ,fill opacity=1 ] (444.77,110) .. controls (444.77,107.02) and (447.11,104.6) .. (450,104.6) .. controls (452.89,104.6) and (455.23,107.02) .. (455.23,110) .. controls (455.23,112.98) and (452.89,115.4) .. (450,115.4) .. controls (447.11,115.4) and (444.77,112.98) .. (444.77,110) -- cycle ;
\draw  [fill={rgb, 255:red, 7; green, 0; blue, 0 }  ,fill opacity=1 ] (94.77,192) .. controls (94.77,189.02) and (97.11,186.6) .. (100,186.6) .. controls (102.89,186.6) and (105.23,189.02) .. (105.23,192) .. controls (105.23,194.98) and (102.89,197.4) .. (100,197.4) .. controls (97.11,197.4) and (94.77,194.98) .. (94.77,192) -- cycle ;
\draw  [fill={rgb, 255:red, 7; green, 0; blue, 0 }  ,fill opacity=1 ] (74.77,260) .. controls (74.77,257.02) and (77.11,254.6) .. (80,254.6) .. controls (82.89,254.6) and (85.23,257.02) .. (85.23,260) .. controls (85.23,262.98) and (82.89,265.4) .. (80,265.4) .. controls (77.11,265.4) and (74.77,262.98) .. (74.77,260) -- cycle ;
\draw  [fill={rgb, 255:red, 7; green, 0; blue, 0 }  ,fill opacity=1 ] (114.77,260) .. controls (114.77,257.02) and (117.11,254.6) .. (120,254.6) .. controls (122.89,254.6) and (125.23,257.02) .. (125.23,260) .. controls (125.23,262.98) and (122.89,265.4) .. (120,265.4) .. controls (117.11,265.4) and (114.77,262.98) .. (114.77,260) -- cycle ;
\draw  [fill={rgb, 255:red, 7; green, 0; blue, 0 }  ,fill opacity=1 ] (154.77,193) .. controls (154.77,190.02) and (157.11,187.6) .. (160,187.6) .. controls (162.89,187.6) and (165.23,190.02) .. (165.23,193) .. controls (165.23,195.98) and (162.89,198.4) .. (160,198.4) .. controls (157.11,198.4) and (154.77,195.98) .. (154.77,193) -- cycle ;
\draw  [fill={rgb, 255:red, 7; green, 0; blue, 0 }  ,fill opacity=1 ] (344.77,191) .. controls (344.77,188.02) and (347.11,185.6) .. (350,185.6) .. controls (352.89,185.6) and (355.23,188.02) .. (355.23,191) .. controls (355.23,193.98) and (352.89,196.4) .. (350,196.4) .. controls (347.11,196.4) and (344.77,193.98) .. (344.77,191) -- cycle ;
\draw  [fill={rgb, 255:red, 7; green, 0; blue, 0 }  ,fill opacity=1 ] (284.77,190) .. controls (284.77,187.02) and (287.11,184.6) .. (290,184.6) .. controls (292.89,184.6) and (295.23,187.02) .. (295.23,190) .. controls (295.23,192.98) and (292.89,195.4) .. (290,195.4) .. controls (287.11,195.4) and (284.77,192.98) .. (284.77,190) -- cycle ;
\draw  [fill={rgb, 255:red, 7; green, 0; blue, 0 }  ,fill opacity=1 ] (404.77,192) .. controls (404.77,189.02) and (407.11,186.6) .. (410,186.6) .. controls (412.89,186.6) and (415.23,189.02) .. (415.23,192) .. controls (415.23,194.98) and (412.89,197.4) .. (410,197.4) .. controls (407.11,197.4) and (404.77,194.98) .. (404.77,192) -- cycle ;
\draw  [fill={rgb, 255:red, 7; green, 0; blue, 0 }  ,fill opacity=1 ] (284.77,260) .. controls (284.77,257.02) and (287.11,254.6) .. (290,254.6) .. controls (292.89,254.6) and (295.23,257.02) .. (295.23,260) .. controls (295.23,262.98) and (292.89,265.4) .. (290,265.4) .. controls (287.11,265.4) and (284.77,262.98) .. (284.77,260) -- cycle ;
\draw  [fill={rgb, 255:red, 7; green, 0; blue, 0 }  ,fill opacity=1 ] (374.77,260) .. controls (374.77,257.02) and (377.11,254.6) .. (380,254.6) .. controls (382.89,254.6) and (385.23,257.02) .. (385.23,260) .. controls (385.23,262.98) and (382.89,265.4) .. (380,265.4) .. controls (377.11,265.4) and (374.77,262.98) .. (374.77,260) -- cycle ;
\draw  [fill={rgb, 255:red, 7; green, 0; blue, 0 }  ,fill opacity=1 ] (404.77,260) .. controls (404.77,257.02) and (407.11,254.6) .. (410,254.6) .. controls (412.89,254.6) and (415.23,257.02) .. (415.23,260) .. controls (415.23,262.98) and (412.89,265.4) .. (410,265.4) .. controls (407.11,265.4) and (404.77,262.98) .. (404.77,260) -- cycle ;
\draw  [fill={rgb, 255:red, 7; green, 0; blue, 0 }  ,fill opacity=1 ] (434.77,260) .. controls (434.77,257.02) and (437.11,254.6) .. (440,254.6) .. controls (442.89,254.6) and (445.23,257.02) .. (445.23,260) .. controls (445.23,262.98) and (442.89,265.4) .. (440,265.4) .. controls (437.11,265.4) and (434.77,262.98) .. (434.77,260) -- cycle ;
\draw  [fill={rgb, 255:red, 7; green, 0; blue, 0 }  ,fill opacity=1 ] (444.77,190) .. controls (444.77,187.02) and (447.11,184.6) .. (450,184.6) .. controls (452.89,184.6) and (455.23,187.02) .. (455.23,190) .. controls (455.23,192.98) and (452.89,195.4) .. (450,195.4) .. controls (447.11,195.4) and (444.77,192.98) .. (444.77,190) -- cycle ;
\draw (311,-5.3) node [anchor=north west][inner sep=0.75pt]    {$\pi $};
\draw (461,102.4) node [anchor=north west][inner sep=0.75pt]    {$\hat{\pi }$};
\draw (416,182.4) node [anchor=north west][inner sep=0.75pt]    {$\hat{\pi }$};
\draw (275,-5.6) node [anchor=north west][inner sep=0.75pt]    {$o$};
\draw (356,102.4) node [anchor=north west][inner sep=0.75pt]    {$\hat{\pi }$};
\draw (146,100.4) node [anchor=north west][inner sep=0.75pt]    {$\hat{\pi }$};
\draw (301,182.4) node [anchor=north west][inner sep=0.75pt]    {$\hat{\pi }$};
\draw (111,182.4) node [anchor=north west][inner sep=0.75pt]    {$\hat{\pi }$};
\draw (171,182.4) node [anchor=north west][inner sep=0.75pt]    {$\hat{\pi }$};
\draw (361,182.4) node [anchor=north west][inner sep=0.75pt]    {$\hat{\pi }$};
\draw (256,102.4) node [anchor=north west][inner sep=0.75pt]    {$\hat{\pi }$};
\draw (456,182.4) node [anchor=north west][inner sep=0.75pt]    {$\hat{\pi }$};
\draw (86,250.4) node [anchor=north west][inner sep=0.75pt]    {$\hat{\pi }$};
\draw (126,250.4) node [anchor=north west][inner sep=0.75pt]    {$\hat{\pi }$};
\draw (296,250.4) node [anchor=north west][inner sep=0.75pt]    {$\hat{\pi }$};
\draw (386,250.4) node [anchor=north west][inner sep=0.75pt]    {$\hat{\pi }$};
\draw (416,250.4) node [anchor=north west][inner sep=0.75pt]    {$\hat{\pi }$};
\draw (446,250.4) node [anchor=north west][inner sep=0.75pt]    {$\hat{\pi }$};
\end{tikzpicture}
\caption{A $2-$neighbourhood $B_2(\mathbb T,o)$ of a vertex-rooted UBGW tree $(\mathbb T,o)$ with the law $\pi$ of the number of children drawn on the root $o$ and the law $\hat{\pi}$ of the number of children on every non-root vertex.} \label{fig:VUBGW}
\end{figure}

\bigskip

The most classical examples of random graphs converging to UBGW trees we consider are sparse Erd\H{o}s–R\'enyi random graph and configuration models:
\begin{itemize}
    \item \textbf{Sparse Erd\H{o}s–R\'enyi random graph:} As introduced in the celebrated paper of Erdős and Rényi \cite{ErdosRenyi}, for $c>0$ and $N\in \mathbb{N}$, the random graph $\mathcal G (N,\frac{c}{N})$ is defined on the vertex set $[N]$ with independent edges between vertices with probability $\frac{c}{N}$. Once uniformly rooted, the sequence of these graphs $\left(\mathcal G (N,\frac{c}{N})\right)_{N\in \mathbb{N}}$ converges locally to a UBGW tree with reproduction law Poisson with parameter $c$, when $N$ goes to $\infty$.
    \item \textbf{Configuration model:} This model was introduced by Bollob\'as in 1980 \cite{B1980} and can be defined as follows.
Let $N\in \mathbb{N}$ and let $d_1, \ldots, d_N \in \mathbb{N}_0$ be such that $d_1 + \cdots +d_N$ is even. We interpret $d_i$ as the number of half-edges attached to vertex $i$. Then, the configuration model associated with the sequence $(d_i)_{1 \leq i \leq N}$ is the random multigraph with vertex set $[N]$ obtained by a uniform perfect matching of these half-edges. If $d_1 + \cdots + d_N$ is odd, we change $d_N$ into $d_N+1$ and do the same construction. Now, let $\mathbf{d}^{(N)}$ be a sequence of random variables defined on the same probability space $(\Omega,\mathcal{F},\mathbb{P})$ such that for every $N\in \mathbb{N}$, $\mathbf{d}^ {(N)} = (d_1^{(N)}, \ldots, d_N^{(N)}) \in \mathbb{N}_0^N$. Furthermore, suppose that there exists a probability measure $\pi$ on $\mathbb{N}_0$ with finite first moment such that
\[  \forall k \in \mathbb{N}_0, \quad \frac{1}{N} \sum\limits_{ j=1}^N \mathbbm{1}_{ d_j^{(N)} = k}  \underset{ \quad N \rightarrow +\infty \quad}{ \longrightarrow} \pi( \{k\}).   \]
The sequence of random configuration graphs associated with $\mathbf{d}^{(N)}$ has asymptotically a positive probability to be simple. In addition, this sequence of random graphs, when uniformly rooted, converges locally in law to the UBGW random tree with offspring distribution $\pi$, see \cite{vanderHofstadBook2} for more details.
\end{itemize}

\section{Message-passing for monomer-dimer ground states}\label{sec:messagepassing}
In this section, we present one of the main tools we use for studying optimal monomer-dimer configurations. In particular, there exists a message-passing algorithm that constructs the optimal solution in linear time on finite trees.

Let $(T,w,x)$ be a finite weighted tree, and assume that there is a unique optimal monomer-dimer configuration on $(T,w,x)$.
Let us consider an edge $e=\{u,v\}$. The principle is to look at the difference between the unique optimal monomer-dimer configuration of $(T,w,x)$ and the optimal monomer-dimer configuration under the constraint of $e$ being a dimer. Let us write $\mathrm{OPT}(T,w,x)$ for the weight of the optimal monomer-dimer configuration on the implicitly weighted $T$.

Let us write $T_{(v,u)}$ and $T_{(u,v)}$ to be the two connected components of $T\setminus {e}$ containing $u$ and $v$, respectively. The key is that the total weight of the optimal configuration \textbf{forcing} $e$ to be a dimer can be written as
\[ w(e)+ \mathrm{OPT}( T_{(v,u)}\setminus \{u\})+\mathrm{OPT}(T_{(u,v)} \setminus \{v\}) .\]
The total weight of the optimal configuration of $T\setminus e$ can simply be written as:
\[ \mathrm{OPT}(T_{(v,u)}) + \mathrm{OPT}(T_{(u,v)}).\]
From this, we deduce that $e$ is a dimer in the optimal configuration $\mathrm{MD}_{\mathrm{opt}}(T,w,x)$ if and only if the first quantity is bigger than the second one, in other words,
\[ w(e)+ \mathrm{OPT}( T_{(v,u)}\setminus \{u\})+\mathrm{OPT}(T_{(u,v)} \setminus \{v\}) > \mathrm{OPT}(T_{(v,u)}) + \mathrm{OPT}(T_{(u,v)})\]
which can be rearranged into contributions of the left and right parts:
\begin{align}\label{dimer-criterion1}
w(e)>  \bigg(\mathrm{OPT}(T_{(v,u)})- \mathrm{OPT}( T_{(v,u)}\setminus \{u\})\bigg)+\bigg(\mathrm{OPT}(T_{(u,v)})-\mathrm{OPT}(T_{(u,v)} \setminus \{v\})\bigg). 
\end{align}

We thus set the messages on the directed edges $(v,u)$ and $(u,v)$ as follows (refer to Figure~\ref{fig:Zdef} for an illustration):
\begin{align}
     Z(v,u)&= \bigg(\mathrm{OPT}(T_{(v,u)})- \mathrm{OPT}( T_{(v,u)}\setminus \{u\})\bigg),\label{dimer-criterion2}\\
    Z(u,v)&=\bigg(\mathrm{OPT}(T_{(u,v)})-\mathrm{OPT}(T_{(u,v)} \setminus \{v\})\bigg).\label{dimer-criterion3}
\end{align}
\begin{figure}
    \centering
    \includegraphics[width=0.5\linewidth]{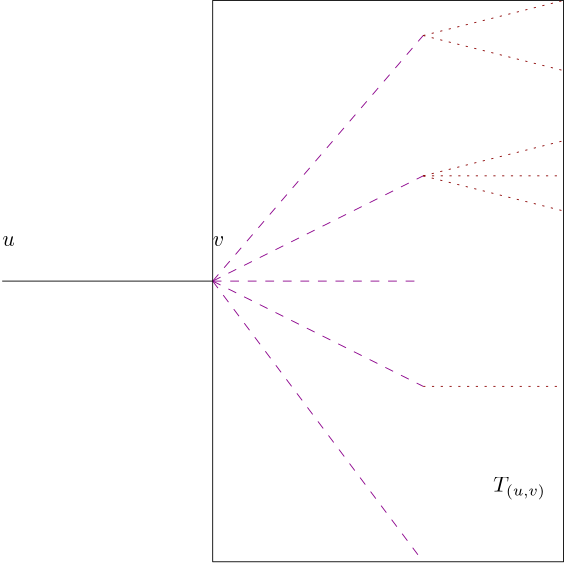}
    \caption{The message $Z(u,v)$ is defined as the difference between the weight of the optimal configuration in the tree in the box and the dotted tree, which is equivalent to the gain of attaching $v$ to the dotted forest through the dashed edges.}
    \label{fig:Zdef}
\end{figure}
From \eqref{dimer-criterion1}--\eqref{dimer-criterion3} we have that the dimer criterion becomes
\begin{equation}\label{eq:edgecriterion}
w(u,v)>Z(u,v)+Z(v,u).
\end{equation}

The message $Z(u,v)$ is exactly the gain of attaching $v$ to the forest $T_{(u,v)} \setminus \{v\}$, which allows us to compute its values recursively on the tree (refer to Figure~\ref{fig:Zrecursion} for an illustration of this derivation).

First, if $v$ is a monomer in $\mathrm{MD}_{\mathrm{opt}}(T_{(u,v)},w,x)$, then the gain is exactly $x(v)$.

Next, assuming that $\{v,u'\}$ is a dimer in the optimal configuration $\mathrm{MD}_{\mathrm{opt}}(T_{(u,v)},w,x)$ for some $u' \sim v$, $u'\neq u$, we gain from the dimer the weight $w(v,u')$, but we have lost the ability to use $u'$ in the subtree $T_{(v,u')}$ instead. The optimal configuration has thus gained $w(v,u')$ and lost $Z(v,u')$ from attaching $\{v\}$, so the total gain is $w(v,u')-Z(v,u')$. 

Finally, the optimal configuration has to optimise among all the possibilities above. Using the convention that the maximum of an empty list is $-\infty$, we get the recursion
\begin{equation}\label{eq:Zrecursion}
     Z(u,v)=\max\left(x(v),\max_{\substack{u' \sim v \\ u' \neq u}}\Big(w(v,u')-Z(v,u')\Big)\right) .  
\end{equation}
\begin{figure}
    \centering
    \includegraphics[width=0.7\linewidth]{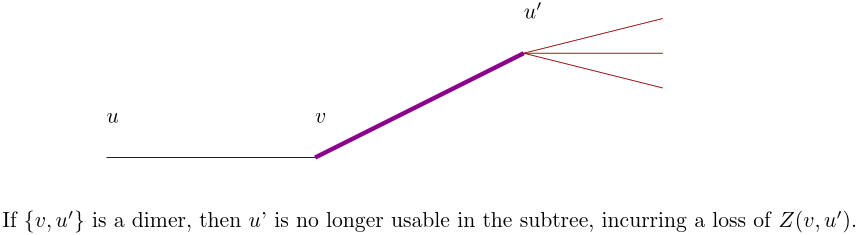}
    \caption{Deduction of the recursion satisfied by the messages.}
    \label{fig:Zrecursion}
\end{figure}

These messages can thus be computed recursively starting from leaves, where they evaluate to the weight of the leaf vertex, and they construct the optimal monomer-dimer configuration in at most $2|E(T)|$ iterations through Criterion~\eqref{eq:edgecriterion}.
The main task in this paper is then to extend this construction and show that there almost surely exists a unique family of messages $Z$ on the weighted rooted UBGW tree $(\mathbb{T},w,x,o)$.

\section{Uniqueness of messages on the limit tree and convergence}
\label{sec:subcritical}
From now on we assume that the conditions (i)-(iii) in Theorem \ref{th:convergenceMD} hold. Recall $m_{\xi}\in \mathbb{R}$ denotes the minimum of the vertex weight distribution $\xi$.

The main arguments in both settings can be broken down as follows:
\begin{enumerate}
    \item First, for a given graph $G$, a vertex $o \in V(G)$ and a depth $H\in \mathbb{N}_0$ such that $B_H(G,o)$ is a tree, we can represent the influence of the optimal monomer-dimer configuration outside the ball $B_H(G,o)$ by a well-chosen boundary condition on the messages on the ball. Namely, take a boundary vertex $v \in \partial B_H(G,o)$ and denote by $u$ its unique neighbour in $B_H(G,o)$. If $v$ is not matched to the exterior of $B_H(G,o)$, then we set $Z(u,v) = x(v)$, but if, on the contrary,  the vertex $v$ is matched to an exterior vertex, then we set $Z(u,v)=+\infty$. Then, the restriction of the optimal monomer-dimer configuration to $B_H(G,o)$ coincides with the monomer-dimer configuration induced by the messages inside $B_H(G,o)$ defined by these boundary conditions.
    \item Another general fact is that inside the ball $B_H(G,o)$, the messages around the root vertex $o$ are deterministically increasing with respect to the boundary conditions when the depth $H$ is even, and decreasing when the depth $H$ is odd. So as $H \rightarrow +\infty$, we get that the messages are almost surely bounded by the limit of two extremal families, one built by setting $Z(u,v)=m_{\xi}$ for $v \in \partial B_H(G,o)$ and $u \in B_H(G,o) $ with $v \sim u$ and the other by setting $Z(u,v)=+\infty$ instead. The main difficulty is to show that the two extremal families converge to the same messages as $H \rightarrow +\infty$. It was only overcome in special cases (mainly when vertex-weights were zero and edge-weights were exponentially distributed) in previous works \cite{gamarnik2003maximum,correlationarnab}, thanks to an explicit formulation of the message distribution. The main contribution of this paper lies in an invariant trick given in Section~\ref{sec:mainprop} that allows us to show that the two extremal families are almost surely equal in i.i.d. weighted UBGW trees $(\mathbb{T},o)$.
    \item As a conclusion, we have that as the depth $H$ increases, any compatible family of messages inside the ball $B_H(G,o)$ must converge to the unique family given by the common almost sure limit of the extremal families. This common limit then constructs the unique ground state of the monomer-dimer model on the limiting tree through Rule~\eqref{eq:edgecriterion}. It is also, by the first point, the universal limit of monomer-dimer models on any graph $(G,o)$ close to the tree $(\mathbb{T},o)$.
\end{enumerate}

Pick a sequence of rooted weighted random graphs $(G_n,w_n,x_n,o_n)$ such that $(G_n,,w_n,x_n,o_n)$ converges locally to $(\mathbb{T},w,x,o)$. 
This is equivalent to saying that almost surely, for any $H\in \mathbb{N}_0$, there exists $N_H\in \mathbb{N}$ such that for all $n>N_H$,
\[ B_H(G_n,w_n,x_n,o_n) \cong B_H(\mathbb{T},w,x,o)  .\]
Our goal is then to show that there exists a locally approximable hence almost surely defined monomer-dimer configuration $\mathbb{MD}$ on $(\mathbb{T},w,x,o)$ such that for every $H_0\in \mathbb{N}_0$, if $\mathrm{MD}(G_n)$ is an optimal monomer-dimer configuration on $(G_n,w_n,x_n)$, then there almost surely exists $N_{H_0}'\in \mathbb{N}$ such that for every $n \geq N'_{H_0}$,
\begin{align}
    B_{H_0}(G_n,\mathrm{MD}(G_n),o_n) \cong B_{H_0}(\mathbb{T},\mathbb{MD},o).    
\end{align}
We will begin by defining $\mathbb{MD}$ and show that it can be characterised as the monomer-dimer configuration given by the almost surely unique family of messages on $(\mathbb{T},o)$ verifying Recursion~\eqref{eq:Zrecursion}. Recall that $m_{\xi}\in \mathbb{R}$ is the minimum of the vertex weight distribution $\xi$. 
\begin{prop}\label{prop:gibbsuniqueness}
Consider the families of truncated decorated trees \[\left\{\left(\mathbb{T}_H,w,x,o,Z_+^H,Z_-^H\right)\right\}_{H\in \mathbb{N}_0}\] defined as follows:
\begin{enumerate}[(1)]
    \item Sample $(\mathbb{T},w,x,o)$ and let $\mathbb{T}_H=B_H(\mathbb{T},w,x,o)$.
    \item Set $Z_-^H=m_{\xi}$ for every edge of $\mathbb{T}_H$ pointing towards the boundary $\partial T_H$.
    \item Define $Z_-^{H}$ on the remainder of $\mathbb{T}_H$, using Recursion~\eqref{eq:Zrecursion}.
    \item Proceed similarly for $Z_{+}^H$ by setting $Z_{+}^H=+\infty$,  instead of $m_\xi$,  on the boundary $\partial T_H$.
    \item Extend both families on $(\mathbb{T},w,x,o)$ by arbitrarily setting any value, say $0$, outside of $\mathbb{T}_H$.
\end{enumerate}

Then $(\mathbb{T},w,x,o)$-almost surely, for any $\overset{\rightarrow}{e} \in \overset{\rightarrow}{E}(\mathbb{T})$, \[Z_+^{H}(\overset{\rightarrow}{e})\] and \[Z_-^{H}(\overset{\rightarrow}{e})\] converge (as $H \rightarrow +\infty$) to a common value $Z(\overset{\rightarrow}{e})$ that is thus $(\mathbb{T},w,x,o)-$measurable.

    Furthermore, $(\mathbb{T},w,x,o)$-almost surely, the only family of messages verifying Recursion~\eqref{eq:Zrecursion} is this common limit $Z$.  
\end{prop}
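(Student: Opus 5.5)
Fix a directed edge $\overset{\rightarrow}{e}=(u,v)$, which we may assume points outwards; inward edges are handled by rerooting, since the subtree behind any directed edge is again a size-biased BGW-type tree by unimodularity. The message $Z^H(u,v)$ depends only on the subtree $T_{(u,v)}$ truncated at depth $H$.

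\textbf{Step 1: monotonicity and alternation.} The map $(z_{u'})_{u'}\mapsto \max\bigl(x(v),\max_{u'}(w(v,u')-z_{u'})\bigr)$ is nonincreasing in each input. Iterating it, the message at an edge at distance $k$ from the boundary is a monotone function of the boundary values, nondecreasing if $k$ is even and nonincreasing if $k$ is odd. Every message satisfies $Z\ge x(v)\ge m_\xi$, so $m_\xi$ and $+\infty$ are the extremal admissible boundary values. Comparing depth $H$ with depth $H+1$ then yields the interlacing
\[
Z_-^{H}\le Z_-^{H+2}\le Z_+^{H+2}\le Z_+^{H}
\]
up to the parity swap. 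So along each parity class of $H$, both families are monotone and bounded, and they admit limits $\underline Z\le \overline Z$. The lower bound is $x(v)$. The upper bound can be taken to be $\max(x(v),\max_{u'}(w(v,u')-m_\xi))<\infty$ once $H\ge 2$, so both limits are finite. The even and odd limits exchange roles, and the pair $(\underline Z,\overline Z)$ satisfies the coupled recursion $\underline Z(u,v)=\max(x(v),\max_{u'}(w-\overline Z(v,u')))$ together with the symmetric one.

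\textbf{Step 2: equality of extremal limits (the main obstacle).} Here I would invoke the main result of Section~\ref{sec:mainprop}, namely Proposition~\ref{prop:mainprop}: the iterated cavity operator on distributions has a unique fixed point of period two, so that $\underline Z$ and $\overline Z$, which are fixed points of the two-step operator, have the same law. Given equality in law together with the pointwise inequality $\underline Z\le\overline Z$, almost sure equality follows immediately, since $\mathbb{E}[\arctan\overline Z-\arctan\underline Z]=0$ and no integrability is needed. Measurability holds because each message is a limit of functions of finite balls. If that proposition were not available, I would try to build an invariant functional from the two-step recursion in the style of Gamarnik--Nowicki--Swirszcz, something like $\mathbb P(w(e)>\overline Z(u,v)+\underline Z(v,u))$ computed in two ways, and use the continuity of $\omega$ and the local convexity of its support to force the laws to coincide. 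I expect this to be the hard step. The second moment of $\pi$ should enter here to control the size-biased offspring in the two-step operator.

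\textbf{Step 3: uniqueness of solutions.} Let $Z$ be any family satisfying Recursion~\eqref{eq:Zrecursion} on $\mathbb{T}$. Its values on the edges at depth $H$ pointing to the boundary lie in $[m_\xi,+\infty]$. Restricting $Z$ to $\mathbb{T}_H$ gives a solution with those boundary values, so by monotonicity $Z$ is sandwiched between $Z_-^H$ and $Z_+^H$ in the appropriate parity order. Letting $H\to\infty$ and applying Step 2 gives $Z=\underline Z=\overline Z$ almost surely, simultaneously for all of the countably many directed edges.
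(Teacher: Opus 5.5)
Your proposal is correct and follows essentially the paper's route. Alternating monotonicity and the interlacing chain give ordered extremal limits satisfying a coupled recursion, Proposition~\ref{prop:mainprop} forces their laws to agree, ordering plus equal laws gives almost sure equality, and the boundary sandwich gives uniqueness. One point of precision: Proposition~\ref{prop:mainprop} does not say the two-step operator has a unique fixed point; it says an ordered pair $(h_+,h_-)$ solving the coupled system \eqref{eq:h+RDE}--\eqref{eq:h-RDE} must coincide. So what you should feed it is the coupled recursion from your Step~1, made distributional via the i.i.d.\ structure of the subtrees behind outward edges, rather than merely the fact that each law is a two-step fixed point.
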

\begin{proof}[Proof of Proposition~\ref{prop:gibbsuniqueness}]
Let us start by studying messages around the root $o$. We will use the fact that for any $v\sim o$, the random variables $(Z_-^{2H})(o,v)$  (resp. $(Z_+^{2H}) (o,v))$ are deterministically increasing (resp. decreasing) as $H$ increases.
Indeed, for a fixed family of weights $(w_l)_{l \in \mathbb{N}_0}$ and any fixed $x \in \mathbb{R}$, if we pick two families $(a_l)_{l \in \mathbb{N}_0}$ and $(a_l')_{l \in \mathbb{N}_0}$ such that $a_l \leq a_l'$ for every $l\in \mathbb{N}_0$, then for any $N\in \mathbb{N}_0$,
\[ \max \left(x,\max_{0\leq l \leq N} (w_l-a_l)\right) \geq \max\left( x, \max_{0\leq l \leq N} (w_l-a_l')\right). \]
In terms of random variables, the following almost sure limits exist when looking at even depths:
\begin{align*}
    Z_{-}^{\text{even}} &= \lim_{H \rightarrow +\infty} Z_{-}^{2H} ,\\
    Z_{+}^{\text{even}} &= \lim_{H \rightarrow +\infty} Z_{+}^{2H}.
\end{align*}
For the odd depths, the analogous limits also exist around the root:
\begin{align*}
    Z_{-}^{
    \text{odd}} &= \lim_{H \rightarrow +\infty} Z_{-}^{2H+1}, \\
    Z_{+}^{\text{odd}} &= \lim_{H \rightarrow +\infty} Z_{+}^{2H+1}.
\end{align*}
Furthermore, we obtain the following almost sure  chain of inequalities around the root
\begin{align*} 
&Z_{-}^{2H} \leq Z_{+}^{2H+1} \leq Z_{-}^{2H+2} \leq Z_{+}^{2H+2} \leq Z_{-}^{2H+1} \leq Z_{+}^{2H}.    
\end{align*}
Going from the outer edge to the centre, the first inequalities are obtained by looking at the boundary of both variables at depth $2H$, the next inequalities by looking at their boundary at depth $2H+1$, and the centre inequality by looking at their boundary conditions at depth $2H+2$. Taking $H$ to infinity implies that:
\[  Z_{-}^{\text{even}} \leq Z_{+}^{\text{odd}} \leq  Z_{-}^{\text{even}}  \leq Z_{+}^{\text{even}} \leq Z_{-}^{
    \text{odd}} \leq Z_{+}^{\text{even}}, \]
which condenses into
\begin{equation}\label{eq:Zpmeven}
Z_{-}^{\text{even}} = Z_{+}^{\text{odd}} \leq Z_{+}^{\text{even}} = Z_{-}^{
    \text{odd}}.    
\end{equation} 
To show the conclusion, we only need to show that the centre inequality in \eqref{eq:Zpmeven} is in fact an equality. For this purpose, it is sufficient to show that the distribution of the two variables on the left is the same as the distribution of the two variables on the right.
Furthermore for any $v \sim o$, we have that
\[Z_+^{2H+1}(o,v)=\max\bigg(x(v),\max_{\substack{u \sim v \\ u \neq o}} \left( w(v,u)-Z^{2H+1}_+(v,u)   \right) \bigg). \]
Now, the random variable $Z_+^{2H}(o,v)$ has the same distribution as $Z_+^{2H+1}(u,v)$ for any $v \sim u,v \neq o$. Indeed, the subtree that $Z_+^{2H+1}(u,v)$ uses has distribution $(\mathbb{T}_{2H},x,w,o)$ by virtue of the underlying tree being an i.i.d. weighted UBGW and $(u,v)$ also sees a boundary condition valued at $+\infty$.
Writing $\zeta_+^{2H+1}$ to be the distribution of $Z_+^{2H+1}(o,v)$ and $\zeta_+^{2H}$ to be the distribution of $Z_+^{2H}(o,v)$,
we deduce that for any $H\in \mathbb{N}_0$, the following equality in law holds:
\begin{equation}
    Z_+^{2H+1} \overset{\mathcal{L}}{=} \max\bigg( X, \max_{1 \leq l \leq \hat{N}} \Big(w_l-Z_+^{2H, (l)}\Big)\bigg)
\end{equation}
where $Z_+^{2H+1} \sim \zeta_+^{2H+1}$, $X \sim \xi$, $\hat{N} \sim \hat{\pi}$, $(w_l)_{l \in \mathbb{N}}$ an i.i.d. sequence of law $\omega$, and $(Z_+^{2H,(l)})_{l \in \mathbb{N}}$ an i.i.d. sequence of law $\zeta_+^{2H}$, all mutually independent.
We thus deduce, taking $H \rightarrow +\infty$ and applying a symmetrical argument to $Z_-^{2H}$, the distributions of  $Z_-^{\text{even}}$ and $Z_+^{\text{even}}$, that we will denote by $\zeta_-$ and $\zeta_+$, must verify a recursive distributional system that we describe below.  

If $(Z_-^{(l)})_{l \in \mathbb{N}_0}$ is an i.i.d. sequence of distribution $\zeta_-$, $(Z_+^{(l)})_{l \in \mathbb{N}_0}$ is an i.i.d. sequence of distribution $\zeta_+$, $X$ of distribution $\xi$, $(w_l)_{l \in \mathbb{N}_0}$ an i.i.d. sequence of distribution $\omega$, and finally $\hat{N}$ of distribution $\hat{\pi}$, all mutually independent, then the two following equalities hold in law:
\begin{align}
    Z_-^{(0)} &\overset{\mathcal{L}}{=} \max\left(X,\max_{1\leq l \leq \hat{N}} \Big(w_l-Z_+^{(l)}\Big)\right), \label{eq:Z+RDE} \\
    Z_+^{(0)} &\overset{\mathcal{L}}{=} \max\left(X,\max_{1\leq l \leq \hat{N}} \Big(w_l-Z_-^{(l)}\Big)\right). \label{eq:Z-RDE}
\end{align}

Let us denote by $h_+$ and $h_-$ the cumulative distribution functions corresponding to $\zeta_+$ and $\zeta_-$. Note that from \eqref{eq:Zpmeven}, more precisely,  from $Z_-^{\text{even}} \leq Z_+^{\text{even}}$  we have $h_+\leq h_-$. Letting $h_X$ be the cumulative distribution function corresponding to $\xi$ and $\hat{\phi}$ be the probability generating function of $\hat{\pi}$, we can rewrite the system~\eqref{eq:Z+RDE}--\eqref{eq:Z-RDE} as the recursive distributional equations (RDE):
\begin{align}
    h_+(t)&=h_X(t)\hat{\phi}\Big( 1-\mathbb{E}_{W\sim \omega} [h_-(W-t) ]  \Big), \label{eq:h+RDE}  \\
    h_-(t)&=h_X(t)\hat{\phi}\Big( 1-\mathbb{E}_{W\sim \omega} [h_+(W-t)  ]  \Big). \label{eq:h-RDE}
\end{align}

Our next task will be to show that this relation is sufficient to guarantee that $h_-$ and $h_+$ are equal. This is the heart of the paper and will be established in Section~\ref{sec:mainprop}.

\begin{prop}\label{prop:mainprop}
For any pair  $(h_+,h_-)$ of cumulative distribution functions  with $h_+ \leq h_-$ that verify Equations~\eqref{eq:h+RDE}--\eqref{eq:h-RDE}, we have  $h_-=h_+$.
\end{prop}
We continue the proof of Proposition~\ref{prop:gibbsuniqueness} by assuming that Proposition~\ref{prop:mainprop} holds, in other words, we have $h_+=h_-$ and hence $\zeta_+=\zeta_-$. Since the inequality $Z_+\geq Z_-$ holds almost surely, we must have $Z_+=Z_-$ almost surely.
Finally, the reasoning holds on any finite neighbourhood of the root, with the difference that the upper bound and lower bounds $Z_+^{\mathrm{even}}(\overset{\rightarrow}{e}),Z_+^{\mathrm{odd}}(\overset{\rightarrow}{e})$ flips based on the parity of the depth of the oriented edge $\overset{\rightarrow}{e}$. 
Next, we will show that this is the unique family satisfying Recursion~\eqref{eq:Zrecursion}. Let $Z$ be any family verifying Recursion~\eqref{eq:Zrecursion} on $(\mathbb{T},w,x,o)$. Fix some $H\in \mathbb{N}_0$. Since the values of this family on the $H-$boundary lie between $m_{\xi}$ and $+\infty$,  we have that $Z$ lies between $Z_-^{H}$ and $Z_+^{H}$. Since we have just shown that $Z_-^{H}$ and $Z_+^{H}$ converge almost surely to the same limit $Z_-=Z_+$ as $H$ goes to infinity, taking $H \rightarrow +\infty$ yields that $Z=Z_-=Z_+$ almost surely.

\end{proof}

The next argument, contained in the lemma below, makes precise the link between the optimal monomer-dimer configuration on a finite tree-like ball of a graph and a family of suitable messages with appropriate boundary conditions. It will be used to establish the link between $(\mathbb{T},w,x,o)$ and $(G_n,w_n,x_n)$ in the proof of Theorem~\ref{th:convergenceMD}. 
\begin{lemma}\label{lem:matchingbord}
    Let $(G,w,x,o)$ be a finite rooted weighted graph. Let $H\in \mathbb{N}_0$ and assume that $G_H:=B_H(G,w,x,o)$ is a tree with a unique optimal monomer-dimer configuration. Let $\mathrm{MD}_{\mathrm{opt}}(G)$ be any optimal monomer-dimer configuration on $(G,w,x)$. Consider the decorated graph $(G_H,o, Z_{\mathrm{MD}}^{H})$ where $Z_{\mathrm{MD}}^{H}$ is defined as follows:
    \begin{enumerate}
        \item For $u\sim v$ with $u\in G_H$ and $v\in \partial G_H$, set
    \begin{itemize}
        \item $Z_{\mathrm{MD}}^{H}(u,v)=x(v)$ if $v$ is not matched to the exterior of $G_H$ by $\mathrm{MD}_{\mathrm{opt}}(G)$, in other words, $\{v,u'\}$ is not a dimer in $\mathrm{MD}_{\mathrm{opt}}(G)$ for any  vertex $u' \in G\setminus G_H$ with $u' \sim v $;
        \item $Z_{\mathrm{MD}}^{H}(u,v)=+\infty$ otherwise. 
    \end{itemize}
    
       \item Use Recursion~\eqref{eq:Zrecursion} to define $Z_{\mathrm{MD}}^{H}$ on the remainder of $(G_H,o)$.
    \end{enumerate}
   Then any edge $\{u,v\}\in G_H$ is a dimer in $\mathrm{MD}_{\mathrm{opt}}(G)$ if and only if 
     \begin{align*}
        w(u,v) > Z_{\mathrm{MD}}^{H}(u,v)+Z_{\mathrm{MD}}^{H}(v,u).
     \end{align*}
\end{lemma}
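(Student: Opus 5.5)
Our plan is to reduce the statement to the finite-tree message-passing criterion \eqref{eq:edgecriterion} of Section~\ref{sec:messagepassing}, applied to a suitably modified tree. Write $\mathrm{MD}:=\mathrm{MD}_{\mathrm{opt}}(G)$ and let $F\subseteq \partial G_H$ be the set of boundary vertices matched by $\mathrm{MD}$ to a vertex of $G\setminus G_H$. Let $G_H':=G_H\setminus F$ be the tree (or forest) obtained by deleting these vertices. The first step is a restriction argument. The restriction $\mathrm{MD}\cap E(G_H')$ must be an optimal monomer-dimer configuration on $(G_H',w,x)$. Indeed, since every vertex of $F$ is already covered by an exterior dimer, any configuration $\mathrm{MD}'$ on $G_H'$ can be glued to $\mathrm{MD}\setminus E(G_H)$, and this gluing is still a valid configuration on $G$. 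Note also that the vertices of $G_H'$ that touch exterior edges are exactly the boundary vertices not in $F$, and these are unmatched externally by definition. The total weight then splits as the weight of $\mathrm{MD}'$ on $G_H'$ plus a fixed exterior contribution, so optimality of $\mathrm{MD}$ on $G$ forces optimality of its restriction on $G_H'$. For edges of $G_H$ incident to $F$, the claim is immediate: such edges are not dimers, and we will check that the criterion fails for them.

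The second step identifies the messages. On $G_H'$, the messages defined in \eqref{dimer-criterion2}--\eqref{dimer-criterion3} satisfy Recursion~\eqref{eq:Zrecursion}, with leaf messages equal to $x(v)$. Boundary vertices not in $F$ are leaves of $G_H$, so the recursion started from $Z_{\mathrm{MD}}^H(u,v)=x(v)$ reproduces the true messages there. For $v\in F$, setting $Z_{\mathrm{MD}}^H(u,v)=+\infty$ makes the term $w(v,u')-Z(v,\cdot)$, or more precisely $w(u,v)-Z(u,v)$ seen from the parent, equal to $-\infty$ in the max. This is exactly the recursion on the tree in which $v$ has been deleted. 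By induction from the boundary inwards, $Z_{\mathrm{MD}}^H$ therefore coincides on every directed edge of $G_H'$ with the genuine messages of $G_H'$. It follows that \eqref{eq:edgecriterion} characterises the dimers of the optimal configuration on $G_H'$, and hence of $\mathrm{MD}$ inside $G_H'$. For an edge $\{u,v\}$ with $v\in F$, the right-hand side of the criterion is $+\infty$, so the strict inequality fails, which is consistent with $\{u,v\}\notin\mathrm{MD}$.

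The main obstacle is uniqueness. Criterion~\eqref{eq:edgecriterion} was derived assuming the optimal configuration on the finite tree is unique, and strict inequality is used to decide the dimers. The hypothesis, however, concerns uniqueness on $G_H$, not on $G_H'$. We would handle this by proving directly that the optimum on $G_H'$ is unique whenever ties do not occur: a second optimum on $G_H'$, extended to $G_H$ by leaving the vertices of $F$ as monomers, does not obviously contradict uniqueness on $G_H$. So we expect to need either genericity of the weights, which holds almost surely because $\omega$ is continuous, or a reinterpretation of the uniqueness hypothesis as covering all such pruned subtrees. In the application, the weights are continuous and ties have probability zero, so we would state that the assumption is used in this strengthened, almost-sure form. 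It also remains to verify the infinite-value conventions, namely that $+\infty$ messages propagate correctly through the max in \eqref{eq:Zrecursion}.
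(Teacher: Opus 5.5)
Your proposal follows the same route as the paper. You delete the externally matched boundary vertices to get $G^{H,\star}=G_H\setminus F$, note that the boundary value $x(v)$ makes $v$ act as a leaf while $+\infty$ turns the corresponding term into $-\infty$ and so effectively deletes $v$, and conclude by exactness of the messages on finite trees. Your explicit checks that $\mathrm{MD}_{\mathrm{opt}}(G)$ restricts to an optimum of $G^{H,\star}$, and that edges touching $F$ fail the criterion, fill in steps the paper leaves implicit.

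Your uniqueness worry is not just a technicality. The paper's proof also tacitly needs uniqueness on $G^{H,\star}$, and uniqueness on $G_H$ alone does not suffice, so the hypothesis cannot be derived ``directly''. Take the path $a\,b\,c\,d\,e$ rooted at $b$ with $H=1$, $x\equiv 0$, $w=1$ on $\{a,b\},\{b,c\},\{c,d\}$ and $w(d,e)=10$:
\begin{itemize}
\item the ball $G_H$ is the path $a\,b\,c\,d$, with unique optimum $\{\{a,b\},\{c,d\}\}$;
\item every optimum of $G$ contains $\{d,e\}$ and exactly one of $\{a,b\},\{b,c\}$;
\item with $Z^H_{\mathrm{MD}}(c,d)=+\infty$, the messages on both $\{a,b\}$ and $\{b,c\}$ sum to exactly $1$, the edge weight, so the strict criterion fails for the dimer that is actually present.
\end{itemize}
So the strengthened hypothesis you propose is the right one: uniqueness on the pruned tree. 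It holds almost surely because $\omega$ is continuous and distinct configurations differ in their dimer sets, and it is precisely what the paper asserts when it applies the lemma in the proof of Theorem~\ref{th:convergenceMD}.
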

\begin{proof}[Proof of Lemma~\ref{lem:matchingbord}]
The idea is that the optimal monomer-dimer configuration on $G_H$ behaves as if every vertex on the boundary matched to the exterior in $\mathrm{MD}_{\mathrm{opt}}(G)$ is removed and untouched otherwise. Setting $Z_{\mathrm{MD}}^{H}(u,v)=x(v)$ has the same effect in $G_H$ as if the vertex $v$ were a leaf. Similarly, setting $Z_{\mathrm{MD}}^{H}(u,v)=+\infty$ forbids the edge $\{u,v\}$ to be a dimer. Indeed, we always have  $+\infty> w(u,v)$ and this removes any effect that the edge $\{u,v\}$ has in the rest of $G_H$ since the value appearing in Recursion~\eqref{eq:Zrecursion} in the next generation from $\{u,v\}$ is $w(u,v)-(+\infty)=-\infty$ which is always smaller than $x(v)$. We know that the $Z$ message formalism is exact on finite trees, so the rule    
\[ w(u,v) > Z_{\mathrm{MD}}^{H}(u,v)+Z_{\mathrm{MD}}^{H}(v,u) \] 
constructs an optimal monomer-dimer configuration on 
\[G^{H,\star}:=G_H \setminus \big\{v \in G: \left(\exists u' \in G\setminus G_H\right) \wedge \big(\{v,u'\} \text{ is a dimer in } \mathrm{MD}_{\mathrm{opt}}(G) \big) \big\}.\] 
\end{proof}

We can now combine Proposition~\ref{prop:gibbsuniqueness} and Lemma~\ref{lem:matchingbord} to prove Theorem~\ref{th:convergenceMD}.
\begin{proof}[Proof of Theorem~\ref{th:convergenceMD}.]
Fix $H_0\in \mathbb{N}_0$.
Our goal is to show that 
\[ \mathbb{P}\bigg( \exists N >0 \, , \, \forall n > N, B_{H_0}(G_n,\mathrm{MD}_{\mathrm{opt}}(G_n),o_n) \cong B_{H_0}(\mathbb{T},\mathbb{MD},o)  \bigg)  =1 .\]
To this end, fix any arbitrary $\eps>0$, we will show that there exists $N_{\eps,H_0}\in \mathbb{N}$ such that
\begin{equation}\label{ed:goodprob}
\mathbb{P}\bigg( \forall n > N_{\eps,H_0}, B_{H_0}(G_n,\mathrm{MD}_{\mathrm{opt}}(G_n),o_n) \cong B_{H_0}(\mathbb{T},\mathbb{MD},o)  \bigg)>1-\eps.
\end{equation}

First, fix $\eps',\eps''>0$, let us first localise $B_{H_0}(\mathbb{T},o)$ by picking $\Delta>0$ large enough so that the event $$\Omega_{\Delta,H_0}:=\{ B_{H_0}(\mathbb{T},o) \text{ has degrees uniformly bounded above by }\Delta \}$$ has probability at least $1-\eps''$. In this case, the ball is uniformly bounded. We can then use Egorov's Theorem on each edge $\{u,v\}$ in Proposition~\ref{prop:gibbsuniqueness} to find an event $\Omega_{\mathrm{good},H_0}$ of probability at least $1-\eps''$ on which there exists $H\in \mathbb{N}$ large enough such that simultaneously for every $\{u,v\} \in \mathbb{T}_{H_0}$, 
\begin{equation}\label{eq:Z+-uniformcontrol}
 \left|Z_-^{H_0+H}(u,v)+Z_+^{H_0+H}(u,v)\right| < \eps'.
\end{equation}
Using local convergence of $(G_n,w_n,x_n,o_n)$, there exists $N_{H_0+H}\in \mathbb{N}$ such that for all $n>N_{H_0+H}$,
\[B_{H_0+H}(G_n,w_n,x_n,o_n) \cong B_{H_0+H}(\mathbb{T},w,x,o).\]
Take any optimal monomer-dimer configuration $\mathrm{MD}_{\mathrm{opt}}(G_n)$ on $G_n$. We know that \[B_{H_0+H}(G_n,w_n,x_n,o_n)\] is isomorphic to a tree with i.i.d. weights, so there is a unique optimal monomer-dimer configuration on 
\[B_{H_0+H}(\tilde{G}_n,\tilde{o}_n) \setminus \{v\in G: \exists u' \in \tilde{G}_n \setminus B_{H_0+H}(\tilde{G}_n,\tilde{o}_n), \{v,u'\} \text{ is a dimer in } \mathrm{MD}_{\mathrm{opt}}(G_n)\}.\]

Using Lemma~\ref{lem:matchingbord}, $\mathrm{MD}_{\mathrm{opt}}(G_n)$ agrees on $B_{H_0+H}(G_n,o_n)$ with a family of messages $Z_{\mathrm{MD}}^{H_0+H}$ constructed with boundary conditions specified inside the lemma. 

We can now use the isomorphism between $B_{H_0+H}(G_n,w_n,x_n,o_n)$ and $B_{H_0+H}(\mathbb{T},w,x,o)$ to map $Z_{\mathrm{MD}}^{H_0+H}$ on $B_{H_0+H}(\mathbb{T},w,x,o)$ into a family $Z_{\mathrm{MD}}^{H_0+H}(\mathbb{T)}$ verifying Recursion~\eqref{eq:Zrecursion} on $B_{H_{0}+H}(\mathbb{T})$. We can once again use the almost sure decreasing property of the messages to obtain that this family $Z_{\mathrm{MD}}^{H+H_0}(\mathbb{T})$ must be contained in the interval between $Z_-^{H_0+H}$ and $Z_+^{H_0+H}$:
\[  Z_{\mathrm{MD}}^{H_0+H}(\mathbb{T}) \in \left[\min\left(Z_-^{H_0+H},Z_{+}^{H_0+H}\right) , \max \left( Z_-^{H_0+H},Z_+^{H_0+H}\right) \right].  \]
Since $Z$ on $(\mathbb{T},w,x,o)$ also lies in the above interval and Inequality~\eqref{eq:Z+-uniformcontrol} implies that the length of this interval is uniformly bounded by $\eps'$, we obtain the uniform bound on valid on $\mathbb{T}_H$:
\[  \left|Z-Z_{\mathrm{MD}}^{H_0+H}(\mathbb{T)}\right| <\eps'.   \]
This allows us to state that the random sets   
\[ \left\{w(u,v) > Z_{\mathrm{MD}}^{H_0+H}(\mathbb{T})(u,v)+Z_{\mathrm{MD}}^{H_0+H}(\mathbb{T})(v,u) \right\} \]
and 
\[ \left\{w(u,v) > Z(u,v)+Z(v,u) \right\}  \]
on $B_{H_0}(\mathbb{T},o)$ can only differ if there exists an edge $\{u,v\} \in B_{H_0}(\mathbb{T},o)$ such that $w(u,v)$ lies in the interval between
\[ \min \left( Z(u,v)+Z(v,u),Z_{\mathrm{MD}}^{H_0+H}(\mathbb{T})(u,v)+Z_{\mathrm{MD}}^{H_0+H}(\mathbb{T})(v,u)  \right) \] and\[ \max \left( Z(u,v)+Z(v,u),Z_{\mathrm{MD}}^{H_0+H}(\mathbb{T})(u,v)+Z_{\mathrm{MD}}^{H_0+H}(\mathbb{T})(v,u)\right) .\]
Inequality~\eqref{eq:Z+-uniformcontrol} implies that the size of each of these intervals is bounded by at most $2\eps'$.

Let us write $A_{\mathrm{bad}}(u,v)$ to be the corresponding event  for each edge $\{u,v\} \in B_{H_0}(\mathbb{T},o)$.
Since the random set 
\[\left\{w(u,v) > Z_{\mathrm{MD}}^{H_0+H}(\mathbb{T})(u,v)+Z_{\mathrm{MD}}^{H_0+H}(\mathbb{T})(v,u) \right\} \] identifies with $\mathrm{MD}_{\mathrm{opt}}(G_n)$, we have shown that the event
\[ \bigg(\left\{ \forall n > N_{H_0}, B_{H_0}(G_n,\mathrm{MD}_{\mathrm{opt}}(G_n),o_n) \cong B_{H_0}(\mathbb{T},\mathbb{MD},o)  \right\}    \cap \Omega_{\Delta,H_0} \cap  \Omega_{\mathrm{good},H_0}   \bigg)^{\complement}  \]
is contained in the event
\[ \bigcup_{ \{u,v\} \in B_{H_0}(\mathbb{T},o)}A_{\mathrm{bad}}(u,v).   \]

Let us define the map $g_\omega: t_0 \mapsto \sup_{t \in \mathbb{R}} \mathbb{P}_{W \sim \omega}(W \in [t,t+t_0]) $. We have that $g_\omega(0)=0$ and $g_\omega$ is continuous around $0$ as $\omega$ is continuous. Conditioning on the unweighted configurations appearing in $B_{H_0}(\mathbb{T},o)$ and using the fact that the weights are independent from the unweighted tree, we obtain 
\[ \mathbb{P}\left(\bigcup\nolimits_{ \{u,v\} \in B_{H_0}(\mathbb{T},o)}A_{\mathrm{bad}}(u,v)  \right)  \leq   \mathbb{E}[ |E(B_{H_0}(\mathbb{T},o)| ]\ g_\omega(2\eps') .\]
Since $\hat{\pi}$ is assumed to have finite expectation, $\mathbb{E}[ |E(B_{H_0}(\mathbb{T},o)| ]$ is finite for every $H_0\in \mathbb{N}_0$.
Putting everything together, using the fact that both $\Omega_{\Delta,H_0}$ and $\Omega_{\mathrm{good},H_0}$ have probability at least $1-\eps''$, we have shown that
\[ \mathbb{P}\bigg(\forall n > N_{H+H_0}, B_{H_0}(G_n,\mathrm{MD}_{\mathrm{opt}}(G_n),o_n) \cong B_{H_0}(\mathbb{T},\mathbb{MD},o) \bigg) \]
is lower bounded by
\[ 1- \mathbb{E}[|E(B_{H_0}(\mathbb{T},o)| ]\ g_\omega(2\eps')-2\eps'' . \]
By choosing $\eps',\eps''$ such that $\mathbb{E}[ |E(B_{H_0}(\mathbb{T},o)| ]\ g_{\omega}(2\eps')+2\eps''<\eps$, we obtain \eqref{ed:goodprob}, finishing the proof.
\end{proof}

\begin{remark}
        If one would write this proof for $L^1$-local topology of weighted graphs, then the interval in the condition on each $A_{\mathrm{bad}}(u,v)$ would grow by an additional $\eps'$ incurred from the allowed variability of the weights in $B_{H_0+H}(G_n,w_n,x_n,o_n)$ compared to $B_{H_0+H}(\mathbb{T},w,x,o)$. Indeed, the families $Z_+^{H_0+H}$ and $Z_{-}^{H_0+H}$ are $1-$Lipschitz with respect to every weight appearing in $B_{H_0+H}(G_n,w_n,x_n,o_n)$. The proof would thus be the same with $g_{\omega}(3\eps')$ instead of $g_{\omega}(2\eps')$.
\end{remark}
We now explain the adaptation of the proof of Theorem~\ref{th:convergenceMD} to that of Theorem~\ref{th:decay}.
\begin{proof}[Proof of Theorem~\ref{th:decay}]
    This theorem is essentially proved the same way, except that instead of considering one ball, we consider two balls around $o_n$ and $o_n'$. By using the condition that the graph distance between $o_n$ and $o_n'$ goes to infinity, for any $H,H'\in \mathbb{N}_0$, we can find $N_{H,H'}\in \mathbb{N}$ large enough so that for any $n\geq N_{H,H'}$, the two balls $B_{H}(G_n,o_n)$ and $B_{H'}(G_n,o_n')$ do not intersect. We can then continue by writing that the influence of one ball on the other is always uniformly controlled by the extremal families in each ball, which maps to extremal families of two independent weighted unimodular Bienaymé-Galton-Watson trees by applying the joint convergence hypothesis. The remainder follows in the same way.
\end{proof}

\begin{proof}[Proof of Corollary~\ref{coro:averageweight}]
Its deduction  is standard once correlation decay is established, as it implies that the covariance between the states of two edges in $\mathrm{MD}_{\mathrm{opt}}(G_n)$ decays with their distance $d$ at the speed with which $Z_+^d$ and $Z_-^d$ converge to their common limit. We thus omit the details.
\end{proof}

\section{Invariance trick}\label{sec:mainprop}
In this section, we prove Proposition~\ref{prop:mainprop}. 
Recall the RDE
\begin{align}
    h_+(t)&=h_X(t)\hat{\phi}\Big( 1-\mathbb{E}_{W\sim \omega} [h_-(W-t) ]  \Big), \tag{\ref{eq:h+RDE}}  \\
    h_-(t)&=h_X(t)\hat{\phi}\Big( 1-\mathbb{E}_{W\sim \omega} [h_+(W-t)  ]  \Big). \tag{\ref{eq:h-RDE}}
\end{align}

The trick is to introduce the following quantity that will allow us to define an invariant:
\begin{equation}
     \mathbb{P}\left(  Z_-+Z_+<W   \right)
\end{equation}
where $Z_+,Z_-,W$ are three independent random variables of distribution $\zeta_+,\zeta_-,\omega$, respectively.

Recall that $h_+$ and $h_-$ denote the cumulative distribution functions corresponding to $\zeta_+$ and $\zeta_-$, respectively.    
By conditioning either over the value of $Z_+$ or over the value of $Z_-$, we get the following equality:
\begin{equation}\label{eq:invariant}
    \int_{\mathbb{R}} \mathbb{P}(Z_-<W-t)\mathrm{d}h_+(t) = \int_{\mathbb{R}}\mathbb{P}(Z_+<W-t)\mathrm{d}h_-(t).
\end{equation}
Rewritten in terms of expectations, Equality \eqref{eq:invariant} yields
\begin{equation}\label{eq:expectaton}
    \int_{\mathbb{R}} \mathbb{E}_{W\sim \omega}[h_-(W-t)  ]\mathrm{d}h_+(t)=\int_{\mathbb{R}}\mathbb{E}_{W\sim \omega}[h_+(W-t)]\mathrm{d}h_-(t).
\end{equation}
We naturally have that $h_+=h_-=0$ on $(-\infty,m_{\xi})$, so using RDE~\eqref{eq:h+RDE}--\eqref{eq:h-RDE}, the fact  that $h_X>0$ on $(m_{\xi},+\infty)$, and the invertibility of the probability generating function $\hat{\phi}$ of $\hat{\pi}$,   we obtain from Equality \eqref{eq:expectaton} that
\begin{equation}\label{eq:inverse-pgf}
    \int_{m_{\xi}}^{+\infty}  \left( 1-\hat{\phi}^{-1}\left(\frac{h_+(t)}{h_X(t)}   \right)    \right) \mathrm{d}h_+(t)=\int_{m_{\xi}}^{+\infty}  \left( 1-\hat{\phi}^{-1}\left(\frac{h_-(t)}{h_X(t)}   \right)    \right) \mathrm{d}h_-(t). 
\end{equation}
Using the function $F:\, u \mapsto (1-\hat{\phi}^{-1}(u))$ that is decreasing on $[0,1]$, we can rewrite Equality \eqref{eq:inverse-pgf} as:
\begin{equation} \label{eq:masterequation}
    \int_{m_{\xi}}^{+\infty} F\left(\frac{h_+(t)}{h_X(t)}\right) \mathrm{d}h_+(t)= \int_{m_{\xi}}^{+\infty} F\left(\frac{h_-(t)}{h_X(t)}\right) \mathrm{d}h_-(t).        
\end{equation}

This quantity, which can be written as a function of distribution $\zeta_+$ or $\zeta_-$, does not depend on which of the distributions we choose and is thus an invariant over solutions to RDE~\eqref{eq:h+RDE}--\eqref{eq:h-RDE}.
We will now use the fact $Z_-^{\text{even}} \leq Z_+^{\text{even}}$, which implies $$h_+\leq h_-.$$

In the remainder of the proof we will consider first the case where $\xi$ is continuous on $[m_{\xi},+\infty)$, and then the case allowing $\xi$ to have an atom at $m_{\xi}$. 

We denote by $S_{\omega}$, $S_{\xi}$, $S_{\zeta_+}$ and $S_{\zeta_-}$ the support of $\omega$, $\xi$, $\zeta_+$ and $\zeta_-$, respectively.

\subsection{Case with \texorpdfstring{$\xi$}{xi} continuous}\label{case:non-atomic}
Assume $\xi$ is continuous on $[m_{\xi},+\infty)$. Then the right-hand side of Equations~\eqref{eq:h+RDE}--\eqref{eq:h-RDE} are both continuous by virtue of both $\xi,\omega$ being continuous, so $\zeta_+$ and $\zeta_-$ are also continuous.

Writing $h_+^{-1}$ and $h_-^{-1}$ to be the generalised inverses of $h_+$ and $h_-$, we can then set the change of variable $h_+(t)=u,h_-(t)=u$ in Equation~\eqref{eq:masterequation} to obtain:

\begin{equation}\label{eq:masterequationcontinuous}
    \int_0^{1}F\left( \frac{u}{ h_X(h_+^{-1}(u))  } \right) \mathrm{d}u=\int_0^{1}F\left( \frac{u}{ h_X(h_-^{-1}(u))  } \right) \mathrm{d}u.
\end{equation}

Using the facts that $F$ is decreasing, $h_X$ is non-decreasing, and $h_+ \leq h_-$ (and thus $h_+^{-1} \geq h_-^{-1}$), we deduce that
\begin{equation}\label{eq:F-inequality}
F\left( \frac{u}{ h_X(h_+^{-1}(u))  } \right) \geq  F\left( \frac{u}{ h_X(h_-^{-1}(u))  } \right) 
\end{equation}
with equality if and only if 
\[ h_X(h_+^{-1}(u))= h_X(h_-^{-1}(u)) .\]

Since Equation~\eqref{eq:masterequationcontinuous} implies that the integrals of both sides of \eqref{eq:F-inequality} are equal on $[0,1]$, we deduce that  
\begin{equation} \label{eq:h_x}  
h_X \circ h_+^{-1}= h_X \circ h_-^{-1}  \quad \text{a.e.} \quad \text{ on } \quad [0,1].
\end{equation}

Recall that $h_X$ denotes the cumulative distribution function of $X\sim \xi$. Here there is a case distinction depending on whether $\xi$ has flat pieces. For simplicity sake, we will assume that $S_{\xi}=[m_{\xi},+\infty)$ in this section and defer the technical general case to Appendix~B.
If it is such, then $h_X$ is invertible on $[m_{\xi},+\infty]$ which contains $S_{\zeta_+}$ and $S_{\zeta_-}$, hence
\begin{equation*}
    h_+^{-1}=h_-^{-1} \quad \text{a.e.} \quad \text{ on} \quad [0,1].
\end{equation*}
As quantile functions uniquely determine the distribution of a random variable, we have thus shown that $h_+=h_-$.
\subsection{Case \texorpdfstring{$\xi$}{xi} with a singular atom at its minimum}\label{case:atomic}

We now assume that $\xi$ is continuous outside of an atom of mass $\alpha>0$ located at the minimum $m_\xi$ of $\xi$.

Once again, RDE~\eqref{eq:h+RDE}--\eqref{eq:h-RDE} show that $\zeta_+,\zeta_-$ are continuous outside of a potential atom located at $m_\xi$.

We will show that
$h_+(m_\xi)=h_-(m_\xi)$. We start with the following proposition that guarantees that both are positive, whose proof with elementary analysis by adapting \cite[Lemma 3.3]{enriquez2024optimalunimodularmatching} is deferred to Appendix~A.
\begin{prop}\label{prop:atomic}
    Assume that $\xi$ is atomic at its minimum $m_\xi$ and $\omega$ is continuous. Then any pair $(h_+,h_-)$ solution to RDE~\eqref{eq:h+RDE}--\eqref{eq:h-RDE} verifies that both $h_+$ and $h_-$ are atomic at $m_\xi$. 
\end{prop}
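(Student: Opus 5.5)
The plan is to read the atom at $m_\xi$ directly off the RDE. Evaluating \eqref{eq:h+RDE}--\eqref{eq:h-RDE} at $t=m_\xi$ and using $h_X(m_\xi)=\alpha>0$ gives
\begin{align*}
h_+(m_\xi)&=\alpha\,\hat{\phi}\big(\mathbb{P}(W<Z_-+m_\xi)\big),\\
h_-(m_\xi)&=\alpha\,\hat{\phi}\big(\mathbb{P}(W<Z_++m_\xi)\big),
\end{align*}
where $W\sim\omega$, $Z_\pm\sim\zeta_\pm$ are independent, and continuity of $\omega$ lets us ignore ties. The size-biased law $\hat{\pi}$ has $\hat{\pi}(0)=0$, so $\hat{\phi}(0)=0$, while $\hat{\phi}(s)>0$ for every $s>0$. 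Hence $h_\pm(m_\xi)>0$ is equivalent to $\mathbb{P}(W<Z_\mp+m_\xi)>0$. Since $h_+\le h_-$, i.e.\ $Z_-$ is stochastically dominated by $Z_+$, it suffices to show $\mathbb{P}(W<Z_-+m_\xi)>0$; the corresponding statement for $Z_+$ then follows. I would argue by contradiction and assume $Z_-+m_\xi\le W$ almost surely for independent copies.

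Write $w_*$ and $M_\omega$ for the essential infimum and supremum of $\omega$. Under the contradiction hypothesis, $Z_-\le a:=w_*-m_\xi$ almost surely. I then propagate support bounds through the system \eqref{eq:Z+RDE}--\eqref{eq:Z-RDE}, following \cite[Lemma 3.3]{enriquez2024optimalunimodularmatching}. Let $s_\pm$ and $S_\pm$ denote the essential infimum and supremum of $\zeta_\pm$. Because $\hat N\ge 1$ almost surely and $\hat N\ge k$ with positive probability for each $k$ in the support of $\hat\pi$, the RDE gives the relations
\begin{align*}
S_\pm&=\max\big(\operatorname{ess\,sup}X,\ M_\omega-s_\mp\big),\\
s_\pm&\ge\max\big(m_\xi,\ w_*-S_\mp\big).
\end{align*}
Combined with $S_-\le a$, these should force $M_\omega\le 2a$ and $\operatorname{ess\,sup}X\le a$. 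In particular $M_\omega$ is finite, so the case $M_\omega=+\infty$ is closed immediately. The same hypothesis shows $h_+(m_\xi)=0$: indeed $W_l-Z_-^{(l)}>m_\xi$ almost surely, so $\zeta_+$ has no atom at $m_\xi$.

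The main obstacle is the bounded case $M_\omega<\infty$. Here the crude support bounds alone need not be contradictory; if $\hat N\equiv1$, one could imagine a self-consistent solution concentrated in an interval. For this case I would use hypothesis (ii), local convexity of $S_\omega$ below $M_\omega$, so that $\omega$ puts positive mass on every interval $[M_\omega-\delta,M_\omega]$ near its top. I would also use that the maximum over $\hat N$ terms pushes $Z_+$ towards $M_\omega-s_-$.

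First, I would show that near its lower endpoint $Z_-$ behaves like $\max_l\big(W_l-Z_+^{(l)}\big)$ with $Z_+\le S_+$, so that $s_-=\max\big(m_\xi,\,w_*-S_+\big)$ holds with equality and has positive mass nearby. Then I would feed this back to get $S_+=M_\omega-s_-$, and show that the endpoint equations
\begin{align*}
S_+&=M_\omega-s_-,\\
S_-&=M_\omega-s_+,\\
s_-&=w_*-S_+
\end{align*}
are incompatible with $S_-\le w_*-m_\xi$ and $s_+\ge s_-\ge m_\xi$ unless $\omega$ is degenerate, which continuity forbids. Combined with the atom of $X$ at $m_\xi$, which forces $Z_-=m_\xi$ with positive probability on the event that all the $W_l-Z_+^{(l)}$ fall below $m_\xi$, this should contradict the assumption. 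That last event has positive probability as soon as $S_+>w_*-m_\xi$, and this inequality is exactly what the endpoint analysis is meant to deliver. If this endpoint bookkeeping fails in some degenerate configuration, I would fall back on the elementary-analysis argument of the cited lemma applied verbatim to the cdf equations.
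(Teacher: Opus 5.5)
\textbf{There is a genuine gap in the bounded case.} Your reduction is correct and matches the paper's. Evaluating the RDE at $m_\xi$, and using $\hat\phi(0)=0<\hat\phi(s)$ for $s>0$ together with $h_+\le h_-$, it suffices to rule out (H): $Z_-+m_\xi\le W$ a.s., i.e.\ $S_-\le w_*-m_\xi$ (equivalently $h_+(m_\xi)=0$).

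The gap is that you never reach a contradiction from (H) when $M_\omega<\infty$. You state $s_\pm$ only with the trivial lower bound $s_\pm\ge\max(m_\xi,w_*-S_\mp)$, conclude that the endpoint relations ``need not be contradictory'', and then sketch a plan that does not hold together:
\begin{itemize}
\item The equation $s_-=w_*-S_+$ presupposes $S_+\le w_*-m_\xi$, whereas your final step needs $S_+>w_*-m_\xi$.
\item An atom of $\zeta_-$ at $m_\xi$ is perfectly compatible with $S_-\le w_*-m_\xi$, so it cannot be the contradiction you are looking for.
\item The intermediate claim $M_\omega\le 2a$ does not follow from your relations. They give $M_\omega\le a+s_+$, with no upper bound on $s_+$.
\item Hypothesis (ii) is irrelevant: every law charges $[M_\omega-\delta,M_\omega]$ by definition of the essential supremum.
\item Deferring to ``the cited lemma verbatim'' is not a proof.
\end{itemize}

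\textbf{The missing ingredient} is the reverse bound $s_+\le\max(m_\xi,w_*-S_-)$. It is proved exactly like your formula for $S_\pm$. Consider the event that $\hat N=k$ with $\hat\pi(k)>0$, $X\le m_\xi+\delta$, and $W_l\le w_*+\delta$, $Z_-^{(l)}\ge S_--\delta$ for all $l\le k$. This event has positive probability by independence; note $w_*>-\infty$ and $S_-<\infty$ under (H). On it, $Z_+\le\max(m_\xi,w_*-S_-)+2\delta$.

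Under (H) this maximum equals $w_*-S_-$. Combining with your relation $S_-\ge M_\omega-s_+$ gives $S_-\ge S_-+(M_\omega-w_*)$ with $S_-$ finite, i.e.\ $M_\omega\le w_*$. This is impossible, because a continuous $\omega$ is not a point mass. If $M_\omega=+\infty$, the same relation forces $S_-=+\infty$, contradicting (H). This closes the argument in one step, with no case distinction on $\hat N$.

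This is essentially an endpoint version of the paper's proof. The paper iterates $h_+(m_\xi)=0\Rightarrow h_-(W-m_\xi)=1$ a.s.\ $\Rightarrow h_+(W'-W+m_\xi)=0$ a.s.\ $\Rightarrow\cdots$. It then uses the non-degenerate increments $W'_l-W_l$ to push the zero set of $h_+$ to $+\infty$.
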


Set 
\begin{equation*}
\beta_+=h_+(m_\xi) \quad \text{and} \quad \beta_-=h_-(m_\xi).    
\end{equation*}
Note that $\beta_+ \leq \beta_-$ because $h_+ \leq h_-$ and that Proposition~\ref{prop:atomic} implies $\beta_+,\beta_->0$.

Define $\mu_-$ (resp. $\mu_+$) to be the pushforward measure of $\mathrm{d}h_-$ by $h_-$ (resp. $\mathrm{d}h_+$ by $h_+$). Analogously to Equation~\eqref{eq:masterequationcontinuous}, we have
\[     \int_{\mathbb{R}} F\left(\frac{u}{h_X( h_+^{-1}(u) )}\right) \mathrm{d}\mu_+(u)= \int_{\mathbb{R}} F\left(\frac{u}{h_X( h_-^{-1}(u) )}\right) \mathrm{d}\mu_-(u).           \]

As $h_+$ and $h_-$ are continuous on $(m_\xi,+\infty)$ with respective image $(\beta_+,1)$ and $(\beta_-,1)$, we have that $\mathrm{d}\mu_+(u)=\mathrm{d}u$ on $(\beta_+,1)$ and similarly $\mathrm{d}\mu_-(u)=\mathrm{d}u$ on $(\beta_-,1)$. Taking into account the atom at $m_\xi$ of respective mass $\beta_+$ and $\beta_-$ gives:
\begin{align*}
     &\int_{\beta_+}^1 F\left(\frac{u}{h_X( h_+^{-1}(u) )}\right) \mathrm{d}u + \beta_+\,  F\left(\frac{\beta_+}{h_X( h_+^{-1}(\beta_+) )}\right)     \\ =&\int_{\beta_-}^1 F\left(\frac{u}{h_X( h_-^{-1}(u) )}\right)  \mathrm{d}u  +  \beta_-\,  F\left(\frac{\beta_-}{h_X( h_-^{-1}(\beta_-) )}\right).    
\end{align*}
By definition, we have $ h_+^{-1}(\beta_+) =m_\xi$ and $h_-^{-1}(\beta_-) =m_\xi$, and by assumption, $ h_X(m_\xi)=\alpha$, so the above equality becomes
\begin{equation}\label{eq:masterequationminatom}
 \int_{\beta_+}^1 F\left(\frac{u}{h_X( h_+^{-1}(u) )}\right) \mathrm{d}u + \beta_+\, F\left(\frac{\beta_+}{\alpha}\right)   =\int_{\beta_-}^1 F\left(\frac{u}{h_X( h_-^{-1}(u) )}\right)  \mathrm{d}u  +  \beta_-\,  F\left(\frac{\beta_-}{\alpha}\right). 
\end{equation}

As in Case \ref{case:non-atomic}, we use Inequality \eqref{eq:F-inequality}, now valid for  $ u \in [\beta_-,1] $,
\[ F\left( \frac{u}{ h_X(h_+^{-1}(u))  } \right) \geq  F\left( \frac{u}{ h_X(h_-^{-1}(u))  } \right) \]
with equality if and only if 
\[ h_X(h_+^{-1}(u))= h_X(h_-^{-1}(u)) .\]
Integrating this inequality over $ u \in [\beta_-,1] $ yields
\begin{equation}\label{eq:intermediate1}
  \int_{\beta_-}^1 F\left( \frac{u}{ h_X(h_+^{-1}(u))  } \right) \mathrm{d}u \geq  \int_{\beta_-}^1 F\left( \frac{u}{ h_X(h_-^{-1}(u))  } \right) \mathrm{d}u .
\end{equation}

Using the facts that $h_+^{-1} \geq h_-^{-1}$ and $h_X$ is non-decreasing, we have that for any $u \in [\beta_+,\beta_-]$,  $h_X(h_+^{-1}(u))  \geq h_X(h_+^{-1}(\beta_+)) = h_X(m_\xi)=\alpha$ and thus $\frac{u}{h_X(h_+^{-1}(u))} \leq \frac{u}{\alpha}$. Since $F$ is decreasing, we deduce that 
\[ F\left(\frac{u}{h_X(h_+^{-1}(u))}\right) \geq F\left( \frac{u}{\alpha}  \right)   \quad \text{for} \quad u \in [\beta_+,\beta_-].\]
Integrating this inequality over $[\beta_+,\beta_-]$ gives:
\begin{equation}\label{eq:intermediate2}
 \int_{\beta_+}^{\beta_-} F\left(\frac{u}{h_X(h_+^{-1}(u))}\right) \mathrm{d}u \geq \int_{\beta_+}^{\beta_-} F\left( \frac{u}{\alpha}  \right) \mathrm{d}u.
\end{equation}

Integrating the right-hand side of Inequality \eqref{eq:intermediate2} by part, and remembering that $F:u \mapsto 1-\hat{\phi}^{-1}(u)$, and hence $F':u \mapsto -(\hat{\phi}^{-1})'(u) = \frac{-1}{\hat{\phi}'(\hat{\phi}^{-1}(u))} < 0$, yields
\begin{align*}
\int_{\beta_+}^{\beta_-} F\left( \frac{u}{\alpha}  \right)\mathrm{d}u
&= \left[ uF\left(\frac{u}{\alpha}\right)   \right]_{\beta_+}^{\beta_-} -\int_{\beta_+}^{\beta_-} \frac{u}{\alpha}F'\left(\frac{u}{\alpha}\right)\mathrm{d}u  \\
&= \beta_-\, F\left( \frac{\beta_-}{\alpha}\right) - \beta_+\, F\left(\frac{\beta_+}{\alpha} \right) -\int_{\beta_+}^{\beta_-} \frac{u}{\alpha} F'\left( \frac{u}{\alpha}\right)\mathrm{d}u \\
&\geq \beta_-\, F\left( \frac{\beta_-}{\alpha}\right) - \beta_+\, F\left(\frac{\beta_+}{\alpha} \right)
\end{align*}
with equality occurring if and only if $\beta_-=\beta_+$ as $F'$ is non-zero on $[0,1]$.
Putting this together with Inequality~\eqref{eq:intermediate2} gives

\begin{equation}\label{eq:intermediate3}
     \int_{\beta_+}^{\beta_-} F\left(\frac{u}{h_X(h_+^{-1}(u))}\right) \mathrm{d}u  \geq \beta_-\, F\left( \frac{\beta_-}{\alpha}\right) - \beta_+\, F\left(\frac{\beta_+}{\alpha} \right).
\end{equation}

Putting Inequality~\eqref{eq:intermediate1} and Inequality~\eqref{eq:intermediate3} together gives

\begin{equation}\label{intermediate4}
     \int_{\beta_+}^1 F\left( \frac{u}{ h_X(h_+^{-1}(u))  } \right) \mathrm{d}u \geq  \int_{\beta_-}^1 F\left( \frac{u}{ h_X(h_-^{-1}(u))  } \right) \mathrm{d}u + \beta_-\, F\left( \frac{\beta_-}{\alpha}\right) - \beta_+\, F\left(\frac{\beta_+}{\alpha} \right),
\end{equation}
which is equivalent to 
\[      \int_{\beta_+}^1 F\left( \frac{u}{ h_X(h_+^{-1}(u))  } \right) \mathrm{d}u + \beta_+\, F\left(\frac{\beta_+}{\alpha} \right) \geq  \int_{\beta_-}^1 F\left( \frac{u}{ h_X(h_-^{-1}(u))  } \right) \mathrm{d}u + \beta_-\, F\left( \frac{\beta_-}{\alpha}\right)    \]
with equality implying $\beta_-=\beta_+$ and $h_X\circ h_+^{-1}=h_X\circ h_-^{-1}$ almost everywhere on $[\beta_-,1]$.

But the previous deduced Equation~\eqref{eq:masterequationminatom} states precisely that the equality holds, and we thus deduce that $\beta_+=\beta_-$ which translates by definition into 
\begin{align}\label{eq:equality_at_minimum}
 h_+(m_\xi)=h_-(m_\xi).   
\end{align}

Now use RDE~\eqref{eq:h+RDE}--\eqref{eq:h-RDE} to expand this equality into

\[ h_X(m_{\xi})\hat{\phi}\Big(1-\mathbb{E}_{W\sim \omega}\left[ h_-(W-m_{\xi})\right]\Big)=h_X(m_{\xi})\hat{\phi}\Big(1-\mathbb{E}_{W\sim \omega}\left[ h_-(W-m_{\xi})\right]\Big) . \]
As $h_X(m_{\xi})=\alpha>0$, we can cancel out on both sides and use invertibility of $\hat{\phi}$ to obtain
\[ \mathbb{E}_{W\sim \omega}\left[ h_-(W-m_{\xi})\right]=\mathbb{E}_{W\sim \omega}\left[ h_+(W-m_{\xi})\right]\]
and then use that $h_+ \geq h_-$ to get that $W-$a.e.
\begin{align} 
h_-(W-m_{\xi})=h_+(W-m_{\xi}). \label{eq:hpm}
\end{align}

Note that if $M_{\omega}-m_{\xi} \leq  m_{\xi}$, then $h_+=h_-=h_X$, equivalently $\zeta_+=\zeta_-=\xi$, are clearly the (maximal and minimal) solutions to RDE~\eqref{eq:h+RDE}--\eqref{eq:h-RDE}. Thus we assume   $M_{\omega}-m_{\xi} > m_{\xi}$ below and in Appendix~B.

Now, if $S_{\omega}$ spanned $[m_{\xi},M_{\omega}]$ and $m_{\xi}\geq 0$, then Equation~\eqref{eq:hpm} implies by density that $h_-=h_+$ on $[m_{\xi},M_{\omega}-m_{\xi}]$. Furthermore, RDE~\eqref{eq:h+RDE}--\eqref{eq:h-RDE}  imply that $h_+=h_-=h_X$ on $(M_{\omega}-m_{\xi},+\infty)$. So we are done under condition that $S_{\omega}$ contains $[m_{\xi},M_{\omega}]$ and $m_{\xi}\geq 0$. Once again, we will defer the technical general case where flat pieces appear to Appendix~B.
 \bigskip    
\addcontentsline{toc}{section}{References}
\printbibliography

\bigskip 

\appendix

\section*{Appendix A}\label{Appendix_Prop5.3}

\begin{proof}[Proof of Proposition~\ref{prop:atomic}]
    Since $h_+ \leq h_-$, the statement is equivalent to showing $h_+(m_\xi)>0$. We will reason by contradiction.
    Assume that $h_+(m_\xi)=0$, evaluating this in Equation~\eqref{eq:h+RDE} yields
    \[ 0=h_X(m_\xi)\hat{\phi}\Big(1-\mathbb{E}_{W\sim \omega}[h_-(W-m_\xi)]\Big)   .\]
    Since $h_X$ is atomic at $m_\xi$, we have that $h_X(m_\xi)>0$ and hence
    \[ 0=\hat{\phi}\Big(1-\mathbb{E}_{W\sim \omega}[h_-(W-m_\xi)]\Big)    \]
    which implies that 
    \[ 1= \mathbb{E}_{W\sim \omega}[h_-(W-m_\xi)]. \]
    So almost surely over $W \sim \omega$, we have that
    \[ 1=h_-(W-m_\xi).    \]
Using this and the right-hand side of Equation~\eqref{eq:h-RDE} with $t=W-m_\xi$, we obtain that $W-$almost surely,
    \[ 1= h_X(W-m_\xi)\hat{\phi}\Big(1-\mathbb{E}_{W' \sim \omega}[h_+(W'-W+m_\xi)] \Big) . \]
    Note that this is already a contradiction if $m_\xi>0$. With no assumption on $m_\xi$, this implies that $W-$almost surely,
    \[ 0=\mathbb{E}_{W'\sim \omega}[h_+(W'-W+m_\xi)]. \]
   This further implies that $(W,W')-$almost surely, 
    \[ 0=h_+(W'-W+m_\xi).  \]
    Inducting this argument leads to the conclusion that for any $N\in \mathbb{N}$ and any $(W_l)_{l \in \mathbb{N}_0}, (W_l')_{l \in \mathbb{N}_0}$  independent random variables with distribution $\omega$, we have
    \[ 0=h_+\left( \sum_{l=1}^N (W_l'-W_l)+m_\xi  \right) . \]
    Using that $h_+$ is non-decreasing and that $W_l,W_l'$ are continuous and hence not almost surely constant, this implies that $h_+=0$ on $\mathbb{R}$, which is clearly a contradiction.    
\end{proof}

\section*{Appendix B}\label{AppendixB}
\setcounter{section}{1}          
\renewcommand\thesection{B}
\setcounter{lemma}{0}
\setcounter{prop}{0}
\setcounter{remark}{0}
\setcounter{claimA}{0}
\renewcommand\thelemma{\thesection.\arabic{lemma}}
\renewcommand\theprop{\thesection.\arabic{proposition}}
\renewcommand\theremark{\thesection.\arabic{remark}}

This appendix's goal is twofold:
\begin{enumerate}
    \item[(O1)] In either the atomic or non-atomic case for $\xi$, we will show that the condition of $S_{\omega}$ having a non-empty interval below its maximum $M_{\omega}$ is sufficient so that $h_+(t_0)=h_-(t_0)$ at any non-trivial point $t_0$ implies the full equality $h_+=h_-$ (see Proposition~\ref{prop:objective1}).
    \item[(O2)] Since we already have that $h_+(m_{\xi})=h_-(m_{\xi})$ in the atomic case (see ~\eqref{eq:equality_at_minimum} in Case~\ref{case:atomic}), it remains to show that such a non-trivial point exists in the case where $S_{\xi}$ is non-convex and non-atomic  (see Proposition~\ref{prop:objective2}).
\end{enumerate}

We will proceed in the order announced above. Recall we assumed  that $M_{\omega}-m_{\xi} > m_{\xi}$. 
\begin{propA}\label{prop:objective1}
Assume that there exists $t_0\in [m_\xi,M_{\omega}-m_{\xi}]$ such that $0<h_+(t_0)=h_-(t_0)<1$. Then $h_+=h_-$ on $[m_\xi,M_{\omega}-m_{\xi}]$.
\end{propA}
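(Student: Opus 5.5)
The plan is to propagate the single point of equality $t_0$ along the ``reflection'' maps $s \mapsto W - s$ that appear inside the RDE~\eqref{eq:h+RDE}--\eqref{eq:h-RDE}. Each reflection spreads equality over a translate of the support $S_\omega$. Because $S_\omega$ contains a nondegenerate interval $I=[M_\omega-\delta,M_\omega]$ (this is the local convexity under $M_\omega$ in condition (ii); if $M_\omega=+\infty$ one uses an interval $[a,a+\delta]\subseteq S_\omega$ with $a$ large), each round of propagation enlarges the region of equality by a length $\delta$. Finitely many rounds then exhaust $[m_\xi,M_\omega-m_\xi]$.

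\emph{Basic propagation step.} Suppose $s\geq m_\xi$ satisfies $h_+(s)=h_-(s)$ and this common value lies in $(0,1)$. From \eqref{eq:h+RDE} we get $h_X(s)\geq h_+(s)>0$, so we may divide by $h_X(s)$ and use the strict monotonicity (hence invertibility) of $\hat\phi$ on the relevant range. This gives
\[
\mathbb{E}_{W\sim\omega}[h_-(W-s)]=\mathbb{E}_{W\sim\omega}[h_+(W-s)].
\]
Since $h_+\le h_-$ pointwise, the two integrands agree $\omega$-almost surely, so $h_+(w-s)=h_-(w-s)$ for $\omega$-a.e.\ $w$. These $w$ are dense in $S_\omega$. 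Using right-continuity of distribution functions, together with the continuity of $\zeta_\pm$ away from the possible atom at $m_\xi$ established earlier, we upgrade this to equality at every point of $S_\omega - s$. In particular equality holds on the whole interval $I-s$.

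\emph{Iteration.} Start from $J_0=\{t_0\}$. Given an interval $J_k$ on which $h_+=h_-$, apply the basic step at every $s\in J_k$ where the common value lies in $(0,1)$. This yields equality on $J_{k+1}:=I-J_k$, intersected with the region where the values are not trivial, and
\[
|J_{k+1}|=\delta+|J_k| .
\]
The maps $J\mapsto I-J$ alternate between two families of intervals reflected about the centre $M_\omega/2$. The two-step images $J_{k+2}=J_k+[-\delta,\delta]$ therefore grow symmetrically about fixed centres. After finitely many steps they cover every point of $[m_\xi,M_\omega-m_\xi]$ that can be reached, clipped at the ends. Outside this region no work is needed: below $m_\xi$ both functions vanish, and for $t>M_\omega-m_\xi$ we have $W-t<m_\xi$ almost surely, so $h_\pm(t)=h_X(t)$.

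\emph{Main obstacle.} The delicate part is handling degenerate points, where the propagation step cannot be applied because the common value is $0$ or $1$. If the common value is $1$, then $h_X(s)=1$ and both expectations are forced to equal $0$, so equality still propagates. If the common value is $0$ with $h_X(s)>0$, injectivity of $\hat\phi$ at $0$ may fail when $\hat\pi(0)=0$. I would handle this by monotonicity. The set $\{0<h_+=h_-<1\}$ is, inside the region reached so far, an interval bounded by the level sets $\{h_-=0\}$ and $\{h_+=1\}$. On the complementary pieces equality holds trivially once we show $h_+=0$ where $h_-=0$, and $h_-=1$ where $h_+=1$; both follow from $h_+\le h_-$ and the fact that the RDE pins these sets down via $S_\omega$. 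So propagation can only stop at points where equality already holds for trivial reasons.

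Making this bookkeeping rigorous is where I expect most of the effort to go. That includes checking that the grown intervals never become stuck inside a flat piece of $h_X$ before covering $[m_\xi,M_\omega-m_\xi]$.
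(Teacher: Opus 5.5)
Your argument is essentially the paper's proof. The paper runs the same propagation step along the random alternating sums $A_N=\sum_{l=0}^{N-1}(-1)^lW_{N-l}+(-1)^Nt_0$ up to the first time they drop to $m_\xi$. In Claim~\ref{claim:supportalternate} it shows their conditional supports grow by the interval length on each side every two steps until they fill $[m_\xi,M_\omega-m_\xi]$, and it concludes by density and right-continuity. This is your recursion $J_{k+2}=(J_k+[-\delta,\delta])\cap[m_\xi,M_\omega-m_\xi]$, which does keep growing after hitting $m_\xi$.

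The ``main obstacle'' you flag is not real. $\hat\phi$ is strictly increasing on $[0,1]$, so it is injective at $0$ as well: if $\hat\pi(0)=0$, then $\hat\phi(u)=0$ forces $u=0$. Hence the propagation step only needs $h_X(s)>0$. That holds for every $s>m_\xi$ (and at $s=m_\xi$ in the atomic case), whatever the common value and regardless of flat pieces of $h_X$. You can drop the restriction to common values in $(0,1)$, and with it the unproved level-set workaround.
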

\begin{proof}[Proof of Proposition~\ref{prop:objective1}]
Let us do the non-atomic case with $h_X(m_{\xi})=0$ and explain the minor adaptation in the atomic case at the end. 
Using the assumption $h_+(t_0)=h_-(t_0)$, we evaluate RDE~\eqref{eq:h+RDE}--\eqref{eq:h-RDE} at $t_0$ to obtain 
\begin{align}\label{eq:eval1}
h_X(t_0)\hat{\phi}\Big(1-\mathbb{E}_{W_1\sim \omega}[h_-(W_1-t_0) ]\Big)=h_X(t_0)\hat{\phi}\Big(1-\mathbb{E}_{W_1\sim \omega}[h_+(W_1-t_0) ]\Big).     
\end{align} 
Since $t_0> m_{\xi}$ as $h_+(t_0)>0$, we must have $h_X(t_0)>0$ from Equation~\eqref{eq:h+RDE}. 
Since $\hat{\phi}$ is increasing, it is invertible, so from Equation~\eqref{eq:eval1} we have
\[ \mathbb{E}_{W_1\sim \omega}[h_-(W_1-t_0) ]=\mathbb{E}_{W_1\sim \omega}[h_+(W_1-t_0) ]. \]
Owing to the fact that $h_+ \leq h_-$,  this implies that $W_1-$almost surely,
\[ h_-(W_1-t_0)=h_+(W_1-t_0).\]
Using this and RDE~\eqref{eq:h+RDE}--\eqref{eq:h-RDE}, we now iterate again to obtain
\begin{align}
&h_X(W_1-t_0)\hat{\phi}\Big(1-\mathbb{E}_{W_2\sim \omega}[h_+(W_2-W_1+t_0)]\Big)\label{eq:eval2-1}\\
&= h_X(W_1-t_0)\hat{\phi}\Big(1-\mathbb{E}_{W_2\sim \omega}[h_-(W_2-W_1+t_0)]\Big). \label{eq:eval2-2} 
\end{align}
Now, either $W_1-t_0\leq m_{\xi} $, or $W_1-t_0> m_{\xi} $ and so $h_X(W_1-t_0)>0$. Thus, conditionally on $W_1-t_0 > m_{\xi}$, we can once again cancel $h_X(W_1-t_0)$ out from \eqref{eq:eval2-1}--\eqref{eq:eval2-2}, and using the invertibility of $\hat{\phi}$ we obtain
\[ \mathbb{E}_{W_2\sim \omega}[h_+(W_2-W_1+t_0)]=\mathbb{E}_{W_2\sim \omega}[h_-(W_2-W_1+t_0)].\]
Then, remembering that $h_+ \leq h_-$, we have that $(W_2,W_1)-$almost surely
\[\text{ either } \quad W_1-t_0 \leq m_{\xi} \quad \text{ or } \quad   h_+(W_2-W_1+t_0)=h_-(W_2-W_1+t_0). \]

Let us repeat once more to see how the $h_X>0$ condition evolves, we have that, conditionally on $W_1-t_0 > m_{\xi}$,
\begin{align*}\label{eq:eval3}
&h_X(W_2-W_1+t_0)\hat{\phi}\Big(1-\mathbb{E}_{W_3\sim \omega}[h_-(W_3-W_2+W_1-t_0) ]\Big)\\
&=h_X(W_2-W_1+t_0)\hat{\phi}\Big(1-\mathbb{E}_{W_3\sim \omega}[h_+(W_3-W_2+W_1-t_0) ]\Big).  
\end{align*}
Next, conditionally on $W_2-W_1+t_0>m_\xi$, we still get the next step, that is, $W_3-$almost surely,
\[ h_-(W_3-W_2+W_1-t_0)=h_+(W_3-W_2+W_1-t_0). \]
So we get that $(W_3,W_2,W_1)-$almost surely, 
\[\text{ either } \quad  W_2-W_1+t_0  \leq m_{\xi} \quad \text{ or } \quad  h_-(W_3-W_2+W_1-t_0)=h_+(W_3-W_2+W_1-t_0).  \]

We are now ready to repeat the argument, take $(W_l)_{l\in \mathbb{N}_0}$ an i.i.d. sequence of law $\omega$. For any $N  \in \mathbb{N}$  define the random variable 
\[ A_N:=\sum_{l=0}^{N-1}(-1)^l W_{N-l} + (-1)^{N}t_0  \] and the associated stopping time
\[ N_0:=\inf \{ N'\in \mathbb{N}:  A_{N'} \leq m_{\xi} \} .\]
Prior to $N_0$, we can repeat the above process without $h_X(A_N)$ evaluating to zero and thus propagate the equality between $h_+(A_N)$ and $h_-(A_N)$. We thus get that $(W_l)_{l \in \mathbb{N}_0}-$almost surely, for all $1\leq N' \leq N_0$,
\[ h_+(A_{N'})=h_-(A_{N'}).\]

We now claim that the random variables $(A_{N'})_{1\leq N' \leq N_0}$ will span a dense set in $[m_\xi,M_{\omega}-m_{\xi}]$.

    \begin{claimA}\label{claim:supportalternate}
        Let $x_0$ be a point in $S_{\omega}$ with $x_0>t_0$ (where $t_0$ is as in Proposition~\ref{prop:objective1}) and assume that $S_{\omega}$ contains an open interval of the form $(x_0-\eps,x_0)$. Then over $N\geq 2$,  conditionally on that for any $1\leq N'<N$, 
        \[ A_{N'}= \sum_{l=0}^{N'-1} (-1)^l W_{N'-l} + (-1)^{N'}t_0 >  m_{\xi},  \] 
        the random variables
        \[ A_N= \sum_{l=0}^{N-1} (-1)^l W_{N-l} +(-1)^{N} t_0 \]       
        spans $[m_{\xi}, x_0-m_{\xi}]$.
    \end{claimA}
    \begin{proof}[Proof of Claim~\ref{claim:supportalternate}]
        Let $\eps>0$ be as stated in the hypothesis, such that $(x_0-\eps,x_0] \subseteq S_{\omega}$.
        For $N \in \mathbb{N}$, let $\mathrm{Span}_N$ be the support of $A_N$
        conditionally on that $N_0 > N$. We also set $\mathrm{Span}_0:=\{t_0\}$ which is consistent.

        The recursion satisfied by $\mathrm{Span}_N$ is then
        \[ \mathrm{Span}_{N+1} = \bigg(\bigg\{ w-s: w \in S_{\omega}, s \in \mathrm{Span}_{N} \bigg\} \cap [m_{\xi},+\infty] \bigg) \cup \mathrm{Span}_{N}  . \]

        The sequence $(\mathrm{Span}_N)_{N \in \mathbb{N}_0}$ thus grows until it eventually reaches
        \[ \mathrm{Span}_{\infty} := \bigcup_{N \in \mathbb{N}_0} \mathrm{Span}_{N} . \]
        Let us compute the first couple terms of $\mathrm{Span}_N$ in the simple case where we forget all the mass in $S_{\omega}$ outside of $[x_0-\eps,x_0]$.
        \begin{align*}
            \mathrm{Span}_0&=\{t_0\}, \\
            \mathrm{Span}_1&= \{ w-t_0: x_0-\eps \leq w \leq x_0, w-t_0 \geq m_{\xi}\} \cup \{t_0\} \\
             &= [\max(m_{\xi},x_0-t_0-\eps), x_0-t_0] \cup \{t_0\}, \\
             \mathrm{Span}_2&=\{w-s: x_0-\eps \leq w \leq x_0, \max(m_{\xi},x_0-t_0-\eps)\leq s \leq x_0-t_0, w-s\geq m_{\xi} \} \cup \mathrm{Span}_1 \\
             &=[\max(m_{\xi},x_0-\eps-(x_0-t_0)), x_0-\max(m_{\xi},x_0-t_0-\eps)] \cup \mathrm{Span}_1 \\
             &=[\max(m_{\xi},t_0-\eps),\min(x_0-m_{\xi},t_0+\eps)] \cup \mathrm{Span}_1.
        \end{align*}
    The behaviour then becomes clear, the set $\mathrm{Span}_2$ grew to the right and left of $t_0$ by $\eps$ under the action of the interval of length $\eps$ in $S_{\omega}$, and this will continue in the even sequence $\mathrm{Span}_{2N}$ until either $m_{\xi}$ or $x_0-m_{\xi}$ is reached. The odd sequence $\mathrm{Span}_{2N+1}$ will grow similarly around $x_0-t_0$ instead, so that they will simultaneously cover two intervals of the form $[m_{\xi},m_{\xi}+c] \cup [x_0-m_{\xi}-c,x_0-{m_{\xi}}]$ after at most $\lfloor\frac{2}{\eps}\max(x_0-t_0,t_0-m_{\xi})\rfloor$ amount of steps for some constant $c>0$. Now we just need to show that being limited on the left by $m_{\xi}$ and on the right by $x_0-m_{\xi}$ does not stop the sequence from growing and covering the middle gap had we started with $t_0$ outside of the middle third of $[m_{\xi},x_0-m_{\xi}]$. Let us thus assume that $\mathrm{Span}_{N_0}=[m_{\xi},m_{\xi}+c] \cup [x_0-m_{\xi}-c,x_0-m_{m_{\xi}}]$ for some $N_0>0$ and compute the subsequent steps:
    \begin{align*}
        \mathrm{Span}_{N_0}=& [m_{\xi},m_{\xi}+c] \cup [x_0-m_{\xi}-c,x_0-m_{{\xi}}] ,\\
        \mathrm{Span}_{N_0+1}=&\bigg\{w-s: x_0-\eps \leq w \leq x_0, m_{\xi}\leq s \leq m_{\xi}+ c, w-s \geq m_{\xi} \bigg\} \\
        \cup& \bigg\{w-s: x_0-\eps \leq w \leq x_0, x_0-m_{\xi}-c\leq s \leq x_0-m_{\xi}, w-s \geq m_{\xi} \bigg\}
        \cup \mathrm{Span}_{N_0} \\
        = &[\max(m_{\xi},x_0-\eps-m_{\xi}-c),x_0-m_{\xi}] \\\cup &[\max(m_{\xi},x_0-\eps-x_0+m_{\xi}),x_0-x_0+m_{\xi}+c]
        \cup  \mathrm{Span}_{N_0} \\
        =&[\max(m_{\xi}, x_0-m_{\xi}-c-\eps),x_0-m_{\xi}] \cup [m_{\xi},m_{\xi}+c] \cup \mathrm{Span}_{N_0}, 
         \end{align*}
         and 
          \begin{align*}
        \mathrm{Span}_{N_0+2}=&[\max(m_{\xi}, x_0-m_{\xi}-c-\eps),x_0-m_{\xi}] \\
        \cup &[\max(m_{\xi},x_0-\eps-(x_0-m_{\xi}), x_0-\max(m_{\xi},x_0-m_{\xi}-c-\eps) ] \cup \mathrm{Span}_{N_0+1}\\
        =&[\max(m_{\xi}, x_0-m_{\xi}-c-\eps),x_0-m_{\xi}] \cup [m_{\xi}, \min(x_0-m_{\xi},m_{\xi}+c+\eps) ] \cup \mathrm{Span}_{N_0+1}.        
    \end{align*}
    So we see that $\mathrm{Span}_{N_0+2}$ grows by $\eps$ to the right of $m_{\xi}+c$ and to the left of $x_0-m_{\xi}-c$ compared to $\mathrm{Span}_{N_0+2}$, so it will eventually cover the whole interval $[m_{\xi},x_0-m_{\xi}]$.
    Naturally, if $S_{\omega}$ contains $[x_0,x_0-\eps]$ instead of being exactly equal, then the sequence $(\mathrm{Span}_N)_{N \geq 0}$ can only grow faster than what we just computed, so the conclusion holds.
    \end{proof}

    We continue with the proof of Proposition~\ref{prop:objective1}.    Since we assumed that $S_{\omega}$ contains an open interval of the form $(M_{\omega}-\eps,M_{\omega})$, Claim~\ref{claim:supportalternate} implies that $h_+=h_-$ on some set that is dense in $[m_{\xi},M_{\omega}-m_{\xi}]$. Using the fact that $h_+,h_-$ are non-decreasing (and hence have at most countable discontinuities) and right-continuous, we deduce that $$h_+=h_-\quad \text{on} \quad  [m_{\xi},M_{\omega}-m_{\xi}],$$
    as desired.
    
    Finally, if we assumed $\xi$ to be atomic at $m_{\xi}$, the same proof holds except that the stopping condition becomes $A_{N_0}< m_{\xi}$ instead of $A_{N_0} \leq m_{\xi}$, which changes nothing with regards to the support as we are interested in the closure. This completes the proof of Proposition~\ref{prop:objective1}.
\end{proof} 

We are now done with our first objective (O1). We now proceed with the second one (O2) in the non-atomic case. We thus assume that $h_X(m_{\xi})=0$ in the remainder of Appendix~B.

\begin{propA}\label{prop:objective2}
There exists $t_0 \in (m_{\xi}, M_{\omega}-m_{\xi})$ such $0 < h_+(t_0) = h_-(t_0) < 1$.
\end{propA}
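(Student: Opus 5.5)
The plan is to start from the identity already obtained in Case~\ref{case:non-atomic} without using convexity of $S_\xi$, namely \eqref{eq:h_x}:
\[
h_X\circ h_+^{-1}=h_X\circ h_-^{-1}\quad\text{a.e. on }[0,1].
\]
Since $\xi$ is continuous, so are $\zeta_\pm$, and their quantile functions satisfy $h_-^{-1}\le h_+^{-1}$. The identity then says that for a.e.\ $u$ the closed interval $I_u:=[h_-^{-1}(u),h_+^{-1}(u)]$ carries no $\xi$-mass. So either $I_u$ is a single point, or it sits inside the closure of a gap (a flat piece) of $h_X$.

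I argue by contradiction. Suppose there is no $t_0\in(m_\xi,M_\omega-m_\xi)$ with $0<h_+(t_0)=h_-(t_0)<1$. First I reduce to that interval. On $(M_\omega-m_\xi,+\infty)$ the RDE~\eqref{eq:h+RDE}--\eqref{eq:h-RDE} give $h_+=h_-=h_X$, since $W-t<m_\xi$ there. If $h_X(M_\omega-m_\xi)<1$, right-continuity and continuity of $h_\pm$ give equality at points slightly below $M_\omega-m_\xi$ with value in $(0,1)$, provided $h_X>0$ there; this uses $M_\omega-m_\xi>m_\xi$. So I may assume the whole range $(0,1)$ of $u$ is attained by $h_\pm$ on $(m_\xi,M_\omega-m_\xi]$. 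Moreover, $h_+<h_-$ at every $t$ in this interval where $h_-(t)\in(0,1)$.

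The key step is a ``gap-tracking'' argument. Let $\{G_k\}$ be the (countably many) maximal open intervals on which $h_X$ is constant, lying inside $(m_\xi,M_\omega-m_\xi)$. For a.e.\ $u$ with $I_u$ non-degenerate, both $h_\pm^{-1}(u)$ lie in a common $\overline{G_k}$. Strict inequality $h_+<h_-$ forces $I_u$ to be non-degenerate for every $u$ in the relevant range, and both quantile maps are non-decreasing. Hence $u\mapsto k(u)$ is monotone, and it is constant on intervals of $u$.

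Now I look at a transition value $u^\ast$ where the gap index changes, or where the quantiles leave $\overline{G_k}$ and enter the support. I expect such a value to exist because $h_-$ must increase through points of increase of $h_X$. Indeed $h_\pm(t)>0$ forces $h_X(t)>0$, and $\zeta_-$ cannot be concentrated on $\bigcup_k\overline{G_k}$, since the RDE shows $h_\pm$ is flat wherever $h_X$ is flat only up to the $\hat\phi$-factor. At $u^\ast$ both quantiles must coincide at a common boundary point $t_0$ of a gap, using monotonicity and left/right limits. This gives $h_+(t_0)=h_-(t_0)=u^\ast\in(0,1)$, which is the desired contradiction. Near $m_\xi$ I use that $m_\xi$ is the minimum of $S_\xi$, so $h_X$ increases immediately to the right of $m_\xi$. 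For small $u$ both quantiles are then squeezed toward $m_\xi$ and cannot share a gap unless the gap has $m_\xi$ as endpoint. This handles the start of the $u$-range.

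The main obstacle is making the transition step rigorous. Flat pieces of $h_X$ may accumulate, so $S_\xi$ can be Cantor-like and ``the next gap'' need not exist. I would first try to replace the discrete transition by a supremum argument: set $t_0:=\sup\{t: h_+^{-1}(u)\text{ and }h_-^{-1}(u)\text{ lie in a common gap for all } u\le h_-(t)\}$ and show $h_+(t_0)=h_-(t_0)$ by squeezing between gaps. If that fails, the fallback is a measure-theoretic version: $\int\mathbbm{1}_{I_u\ni s}\,\mathrm{d}u=h_-(s)-h_+(s)$, and this quantity vanishes for $\xi$-a.e.\ $s$. Any $s\in S_\xi$ that is a two-sided point of increase of $h_X$ then yields $t_0=s$, by continuity of $h_\pm$.
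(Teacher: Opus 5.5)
\textbf{Your primary route does not work as written; your fallback is a correct proof once one ingredient is added.}

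\textbf{The gap-tracking argument.} It has two problems.

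First, the reduction step is false. At $t=M_\omega-m_\xi$ one has $W-t\le m_\xi$, so $h_\pm(t)=h_X(t)$ for trivial reasons. Continuity of $h_\pm$ cannot carry this equality to points strictly below $M_\omega-m_\xi$. This boundary equality carries no information, which is exactly why the statement asks for an interior $t_0$.

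Second, the transition step is never proved. Your justification via flatness ``up to the $\hat\phi$-factor'' is only a heuristic.

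Accumulating gaps are in fact harmless. $U_k:=\{u: I_u\subset\overline{G_k}\}$ is an interval by monotonicity of the quantiles. Take one of positive length whose right end $u^\ast$ lies strictly inside the range of $u$. Then $h_+(b_k)\ge u^\ast$ because $u\in U_k$. Also $h_-(b_k)\le u^\ast$ because every $u>u^\ast$ has $h_-^{-1}(u)\ge b_k$. Together these give $t_0=b_k$, and the analogous argument works at $a_k$ for the left end. The only case left is a single $U_k$ filling the whole range. That means $h_-=0$ on $(m_\xi,a_k)$ with $a_k>m_\xi$, and your ``squeeze toward $m_\xi$'' simply assumes this cannot happen.

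\textbf{The fallback.} This is correct, and it is a genuinely different and shorter route than the paper's. By Tonelli and \eqref{eq:h_x}, $\int(h_--h_+)\,\mathrm{d}\xi=\int_0^1\xi(I_u)\,\mathrm{d}u=0$. Since $h_\pm$ are continuous, $\{h_->h_+\}$ is an open $\xi$-null set, hence disjoint from $S_\xi$. So $h_+=h_-$ on all of $S_\xi$; you do not need two-sided points of increase.

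The paper goes differently. It first proves in Claim~\ref{claim:supportdecomposition}, using the convex piece of $S_\omega$ below $M_\omega$, that $h_\pm$ are strictly increasing on $[m_\xi,M_\omega-m_\xi]$. This upgrades the a.e.\ identity $h_X\circ h_+^{-1}=h_X\circ h_-^{-1}$ to an everywhere identity. It then discards the countable set $S_{\xi,\mathrm{bound}}$ so that $h_X$ can be inverted at a single point. Your Fubini argument bypasses both steps and gives equality on the whole support, not at one point. The convexity of $S_\omega$ is then needed only in the propagation step, Proposition~\ref{prop:objective1}. I would drop the gap-tracking and the reduction and make the fallback the proof.

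\textbf{The missing ingredient.} Both routes need $h_\pm(t_0)>0$, and you never prove it.

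Take $t_0\in S_\xi\cap(m_\xi,m_\xi+\delta)$, which is nonempty since $\xi$ has no atom at $m_\xi$. For small $\delta$ this gives $t_0<M_\omega-m_\xi$ and $h_\pm(t_0)\le h_X(t_0)<1$.

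But $h_+(t_0)>0$ does not follow from $h_X(t_0)>0$; it needs the RDE. The paper obtains it from its support claim. A direct argument in the spirit of Appendix~A goes as follows, with $m_W:=\inf S_\omega$:
\begin{enumerate}
\item Suppose $h_+=0$ on $(-\infty,c)$ for some $c>m_\xi$.
\item Since $h_X>0$ on $(m_\xi,c)$, \eqref{eq:h+RDE} forces $\hat\pi(0)=0$ and $h_-=1$ on $(m_W-c,+\infty)$.
\item Since $\hat\pi(0)=0$ gives $\hat\phi(y)\le y$, \eqref{eq:h-RDE} then forces $\mathbb{E}_{W\sim\omega}[h_+(W-s)]=0$ for every $s>m_W-c$. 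That is, $h_+=0$ on $(-\infty,c+M_\omega-m_W)$.
\item Since $M_\omega>m_W$, iterating gives $h_+\equiv0$, a contradiction.
\end{enumerate}
Then $h_-(t_0)\ge h_+(t_0)>0$, and your fallback proves the statement.
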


\begin{proof}[Proof of Proposition~\ref{prop:objective2}]
The idea behind this proposition is that we would like to mimic the simple case in Section~\ref{sec:mainprop} where one could just invert $h_X$ in the identity 
\[ h_X \circ h_+^{-1} = h_X \circ h_-^{-1} \quad \text{a.e.} \quad \text{on}\quad  [0,1]\] to obtain the desired conclusion. 
There are roughly two hurdles:
\begin{enumerate}
    \item The identity only holds a.e. on $[0,1]$, it could \textit{a priori} be possible that $h_+^{-1}$ and $h_-^{-1}$ map almost all of $[0,1]$ to a set where $h_X$ is flat (on its connected components), so that all the actual information is contained within a set of measure zero.
    \item The cumulative distribution function $h_X$ is not invertible in general. Since we only assume that $\xi$ is continuous, it could be highly pathological (e.g., Cantor staircase or similar functions).
\end{enumerate}
To circumvent such obstacles, we utilise the following ideas: 
\begin{enumerate}
\item It is true that $\xi$ can be very pathological, but the little convex interval under $M_{\omega}$ in $S_{\omega}$ is sufficient to deduce that $S_{\zeta_+}$ and $S_{\zeta_-}$ will cover the entire interval $[m_{\xi},M_{\omega}-m_{\xi}]$, and thus, neither $h_+$ nor $h_-$ have flat pieces in that interval. This implies that $h_+^{-1}$ and $h_-^{-1}$ are continuous functions on the range of $h_+$ and $h_-$ on $[m_{\xi},M_{\omega}-m_{\xi}]$. This allows us to deduce that $h_X \circ h_+^{-1}= h_X \circ h_-^{-1}$ holds everywhere---not just almost everywhere---by density and right-continuity. 
\item The topological interior of the support of $S_{\xi}$ can be empty, but the subset of $S_{\xi}$ where $h_X$ is not invertible is at most countable, even if $S_{\xi}$ itself can be of measure zero. So we can pretend that $h_X$ is invertible by removing at most countable points from $S_{\xi}$. Since $S_{\xi}$ must be uncountable by virtue of having no atoms, the remaining set must still be uncountable and hence nonempty. This set can be of measure zero and not dense in $[m_{\xi},M_{\omega}-m_{\xi}]$, but we showed previously in Proposition~\ref{prop:objective1} that a single non-trivial point suffices.
\end{enumerate}
First we determine the supports $S_{\zeta_+}, S_{\zeta_-}$ of $\zeta_+, \zeta_-$.
\begin{claimA}\label{claim:supportdecomposition}
    We have that $S_{\zeta_+}=S_{\zeta_-}=[m_{\xi},M_{\omega}-m_{\xi}] \cup S_{\xi}$.
\end{claimA}
\begin{proof}[Proof of Claim~\ref{claim:supportdecomposition}]
   Since we assume that $M_{\omega}-m_{\xi} \geq m_{\xi}$,  the support of $\zeta_+,\zeta_-$ cannot be contained in $[M_{\omega}-m_{\xi},+\infty)$ as this support is unstable under iterations of RDE~\eqref{eq:h+RDE}--\eqref{eq:h-RDE}. So there exist at least two points $t_{0,+}, t_{0,-} \in [m_{\xi},M_{\omega}-m_{\xi})$ such that $t_{0,+} \in S_{\zeta_+}$ and  $t_{0,-} \in S_{\zeta_-}$. Denote by $o(\pm)$ in an expression to mean that $\pm=+$ or $\pm=-$ and $o(\pm)$ is the opposite of $\pm$. Define $\mathrm{Span}_{N,+}$ and $\mathrm{Span}_{N,-}$ with:
    \begin{align}
        \mathrm{Span}_{0,+}&=\{t_{0,+}\} ,\\
        \mathrm{Span}_{0,-}&=\{t_{0,-}\} ,\\
        \mathrm{Span}_{N+1,\pm}&= \bigg(\bigg\{ w-s: w \in S_{\omega}, s \in \mathrm{Span}_{N,o(\pm)} \bigg\} \cap [m_{\xi},+\infty] \bigg) \cup \mathrm{Span}_{N,o(\pm)}. 
    \end{align}
    Then $\mathrm{Span}_{N,\pm} \subseteq S_{\zeta_{\pm}}$ for all $N\in \mathbb{N}_0$ by using the fact that $\xi$ must attribute positive mass to arbitrary right-neighbourhoods of $m_{\xi}$ (regardless of the atomic or non-atomic case for $\xi$ at $m_{\xi}$) and iterating RDE~\eqref{eq:h+RDE}-~\eqref{eq:h-RDE}. Claim~\ref{claim:supportalternate} thus implies that $\bigcup_{N \in \mathbb{N}_0} \mathrm{Span}_{N,\pm}=[m_{\xi},M_{\omega}-m_{\xi}]$.
    Finally, RDE~\eqref{eq:h+RDE}--~\eqref{eq:h-RDE} also imply that $h_{\pm}(t)=h_X(t)$ whenever $t > M_{\omega}-m_{\xi}$ as $\xi$ will always dominate in the maximum, so 
    \begin{align}
 h_+=h_-=h_X   \quad \text{on} \quad      S_{\xi}\cap [M_{\omega}-m_{\xi},+\infty)=S_{\zeta_{\pm}} \cap [M_{\omega}-m_{\xi},+\infty).\label{eq:hpm=hX}
    \end{align}

\end{proof}

Consider the following decomposition of $S_{\xi}$ by setting
\begin{align}
    S_{\xi,\mathrm{inner}}&:=\left\{ x \in S_{\xi}: \forall \eps >0, h_X(x-\eps)>x>h_X(x-\eps) \right\}, \\
    S_{\xi, \mathrm{bound}}&:=\left\{ x \in S_{\xi}: \exists  \eps_0 >0, h_X(x-\eps_0)=x \text{ or } h_X(x+\eps_0)=x \right\}.
\end{align}
The set $S_{\xi,\mathrm{inner}}$ is not the topological interior of $S_{\xi}$, but it is the reasonable maximal subset of $S_{\xi}$ where $h_X$ is invertible by removing the endpoint of every flat piece. Furthermore, $S_{\xi,\mathrm{bound}}$ is at most countable by mapping every $x \in S_{\xi,\mathrm{bound}}$ to an arbitrarily picked rational number in $[x,x-\frac{\eps_0}{3}]$ or $[x,x+\frac{\eps_0}{2}]$ depending on which condition is satisfied.
    
It will be useful to prove the next fact to conclude the proof of Proposition~\ref{prop:objective2}:
\begin{claimA}\label{claim:existence_of_point}
     If $h_X \circ h_{+}^{-1}=h_X \circ h_-^{-1}$ almost everywhere, then there exists $t_0\in [m_{\xi},M_{\omega}-m_{\xi}]$ such that $0<h_+(t_0)=h_-(t_0)<1$.
\end{claimA}
\begin{proof}[Proof of Claim~\ref{claim:existence_of_point}]
   We know that outside of some set $E$ of measure zero,
   \[h_X \circ h_+^{-1}=h_X \circ h_-^{-1}.\]
   Now thanks to Claim~\ref{claim:supportdecomposition}, we know that $h_+,h_-$ are strictly increasing on $[m_{\xi},M_{\omega}-m_{\xi}]$. Since they are continuous (and increasing), their range over this interval must also be an interval of some form $[0,a_+]$ and $[0,a_-]$ with $a_+ \leq a_-$.
   In particular, this implies that $h_{+}^{-1}$ and $h_{-}^{-1}$ are continuous increasing maps of $[0,a_+]$ into $[m_{\xi},M_{\omega}-m_{\xi}]$.
    Specifically, on $[0,a_+]$, the functions $h_X\circ h_+^{-1}$ and $h_X\circ h_-^{-1}$ are increasing and right-continuous. For such functions, equality almost everywhere implies equality by a density argument, hence $h_X\circ h_+^{-1}=h_X\circ h_-^{-1}$ on $[0,a_+]$.

   Next, since we are in the case where $\xi$ is non-atomic, $m_{\xi}$ cannot be an isolated point of $S_{\xi}$. Moreover, any neighbourhood of the form $[m_{\xi},m_{\xi}+\varepsilon]$ for $\eps>0$ must intersect $S_{\xi}$ uncountable many times, or else $\xi$ would attribute zero mass to $[m_{\xi},m_{\xi}+\varepsilon]$ by union bound, which contradicts $m_{\xi}$ being in $S_{\xi}$. Since we showed before this claim that $S_{\xi,\mathrm{bound}}$ is at most countable, we deduce that $[m_{\xi},m_{\xi}+\eps]\cap S_{\xi,\mathrm{inner}}$ is uncountable thus non empty for every $\eps>0$.
   Finally, fixing $\eps=M_{\omega}-2m_{\xi}$ leads to
   \[ h_+\left([m_{\xi},M_{\omega}-m_{\xi}] \cap S_{\xi,\mathrm{inner}}\right)= [0,a_+] \cap h_+(S_{\xi,\mathrm{inner}}) \neq \emptyset. \]
   On this set, we thus both have that:
   \begin{enumerate}
       \item The equality \[h_X\circ h_+^{-1}=h_X\circ h_-^{-1}\] holds everywhere and not just almost surely.
       \item $h_X$ is invertible for all values of $h_+^{-1}$ by virtue of $h_+^{-1}$ evaluating into $S_{\xi,\mathrm{inner}}$.
   \end{enumerate}
   We can thus invert $h_X$ to get that $h_+^{-1}=h_-^{-1}$ on $[0,a_+] \cap h_+(S_{\xi,\mathrm{inner}})$. Since both $h_+^{-1}$ and $h_-^{-1}$ are continuous increasing on $[0,a_+]$, we can invert to obtain $h_+=h_-$ on $h_+^{-1}([0,a_+]\cap h_+(S_{\xi,\mathrm{inner}}))=[m_{\xi},M_{\omega}-m_{\xi}] \cap S_{\xi,\mathrm{inner}}$, which contains at least one point $t_0$ bigger than $m_{\xi}$ and smaller than $M_{\omega}-m_{\xi}$, hence $0<h_+(t_0)<1$.
   \end{proof}

Recall from \eqref{eq:h_x} that we have 
\begin{equation*}  
h_X \circ h_+^{-1}= h_X \circ h_-^{-1}  \quad \text{a.e.} \quad \text{ on } \quad [0,1],
\end{equation*}
which together with Claim~\ref{claim:existence_of_point} completes the proof of Proposition~\ref{prop:objective2}.
\end{proof}

\smallskip
To conclude, we already know that $h_+=h_-=h_X$ on $(M_{\omega}-m_{\xi},+\infty)$ regardless of $\xi$ having an atom at $m_{\xi}$ or not. In the atomic case where $h_X(m_{\xi})=\alpha>0$, Proposition~\ref{prop:objective1} combined with identity~\eqref{eq:equality_at_minimum} gives that $h_+=h_-$ on $[m_{\xi},M_{\omega}-m_{\xi}]$, completing the equality. In the case where $h_X(m_{\xi})=0$, we combine Propositions~\ref{prop:objective1} and~\ref{prop:objective2}
instead. We have thus obtained that $h_+=h_-$ on $[m_{\xi},+\infty)$ in both cases, completing the proof of Proposition~\ref{prop:mainprop}.

\smallskip
    \begin{remarkA}
     The condition on $S_{\omega}$ was mostly chosen to avoid the pathological case where $S_{\omega}$ and $S_{\xi}$ are such that the sets $(\mathrm{Span}_{N,\zeta})_{N \geq 0}$ defined recursively as before but starting with 
     \begin{align*}
         \mathrm{Span}_{0,\zeta}=\{m_{\xi}\}
     \end{align*}
     end up not covering the entire interval $[m_{\xi}, M_{\omega}-m_{\xi}]$, which could potentially happen if they form some a strict subset $K$ of  $[m_{\xi},M_{\omega}-m_{\xi}]$ verifying
     \[ K= \left(S_{\xi} \cup \{w-s, w \in S_{\omega}, s \in K \}\right) \cap [m_{\xi}, M_{\omega}-m_{\xi}]  .\]
     In this case, the restriction of $\zeta_+$ and $\zeta_-$ to $[m_{\xi},M_{\omega}-m_{\xi}]$ would be supported on $K$. The authors do believe the conclusion may still hold in this case, but will most likely require many more technicalities to deal with the fact that $h_+$ and $h_-$ will be pathological instead of nicely strictly increasing in a right-neighbourhood of $m_{\xi}$.
    \end{remarkA}

\end{document}